\documentclass{amsart}

\title{Free amalgamation and automorphism groups}

\author{Andreas Baudisch}
\address{
Institut f\"ur Mathematik,Humboldt-Universit\"at zu Berlin,
  D-10099 Berlin, Germany}

\email{baudisch@mathematik.hu-berlin.de}
\date{04.06.2014}

\keywords{Model Theory, free amalgamation, automorphism groups, Lie algebras, groups }
\subjclass{03C45}

\usepackage[latin1]{inputenc}

\usepackage{amssymb, amsmath, tikz}
\usetikzlibrary{matrix,arrows, calc}
\usetikzlibrary{fit}
\usetikzlibrary{positioning}

\usepackage{enumerate,xspace}

\RequirePackage{ifpdf}
\ifpdf
   \usepackage[pdftex]{hyperref}
\else
   \usepackage[hypertex]{hyperref}
\fi

\newcommand{\Frai}{ Fra\"{\i}ss\'e  }
\theoremstyle{plain}
\newtheorem{theorem}{Theorem}[section]
\newtheorem{cor}[theorem]{Corollary}

\newtheorem{lemma}[theorem]{Lemma}

\theoremstyle{definition}

\newtheorem{fact}[theorem]{Fact}
\newtheorem{definition}[theorem]{Definition}

\newcommand{\nc}{\newcommand}
\nc{\Q}{\mathbb{Q}}
\nc{\C}{\mathfrak{C}}
\nc{\G}{\mathfrak{G}}
\nc{\N}{\mathbb{N}}
\nc{\cb}{\operatorname{Cb}}
\nc{\Kom}{\mathbb{K}} 
\nc{\K}{\mathcal{K}}
\nc{\J}{\mathcal{J}}
\nc{\tp}{\operatorname{tp}}
\nc{\stp}{\operatorname{stp}}
\nc{\RM}{\operatorname{RM}}

\def\F{\mathbb F}

\nc{\Mr}{\operatorname{MR}}
\nc{\U}{\operatorname{U}}
\nc{\p}{\operatorname{p}}
\nc{\A}{\mathcal{A}} 
\nc{\Cent}{\operatorname{C}}

\nc{\lmto}[1]{\xrightarrow[{#1}]{}} 
\nc{\ggen}[1]{\langle #1\rangle} 
\nc{\rest}[2]{#1\!\!\upharpoonright_{#2}}

\renewcommand{\iff}{if and only if\xspace}

\def\Ind#1#2{#1\setbox0=\hbox{$#1x$}\kern\wd0\hbox to 0pt{\hss$#1\mid$\hss}
\lower.9\ht0\hbox to 0pt{\hss$#1\smile$\hss}\kern\wd0}
\def\Notind#1#2{#1\setbox0=\hbox{$#1x$}\kern\wd0\hbox to
0pt{\mathchardef\nn="0236\hss$#1\nn$\kern1.4\wd0\hss}\hbox
to 0pt{\hss$#1\mid$\hss}\lower.9\ht0
\hbox to 0pt{\hss$#1\smile$\hss}\kern\wd0}
\def\ind{\mathop{\mathpalette\Ind{}}}

\begin{document}
\begin{abstract}
Let $L$ be a countable  elementary language, $M_0$ be a \Frai limit.  We consider  free  amalgamation for L-structures where L is arbitrary.  If  free amalgamation for  finitely generated substructures  exits in $M_0$, then it is a stationary independece relation in the sense of K.Tent and M.Ziegler \cite{TZ12b}.  Therefore $Aut(M_0)$ is universal for $Aut(M)$ for all  substructures $M$ of $M_0$. This follows by a result of  I.M\"uller \cite{Mue13}  We show that c-nilpotent graded Lie algebras over a finite field and c-nilpotent groups of exponent p ($c < p$) with extra predicates for a central Lazard series provide examples. We replace the proof in \cite{Bau04} of the  amalgamation of c-nilpotent graded Lie algebras over a field by a correct one. 
 
\end{abstract}
\maketitle

\section{Introduction}
Let $L$ be a countable elementary language. Let $M_0$ be a \Frai limit in $L$.   Eric Jaligot (\cite{Jal07} asked whether the group  $Aut(M_0)$ of automorphisms of $M_0$ is universal for all groups $Aut(M)$, where $M$ is a substructure of $M_0$. He proved this for random tournaments. The first example is the Urysohn space \cite{Usp90}. Also for \Frai limits in relational languages it is true \cite{Bil12}, if there is free amalgamation for the age.

We introduce the free amalgam $A \otimes_B C$ for a class $\J$ of L-structures, where $L$ is arbitrary (Chapter 2). 
In this  general stuation we use other considerations than in \cite{Bil12}

If  we have free amalgamation in the age of a \Frai limit $M_0$, we can define $A \ind_B C$ for finite subsets of $M_0$ by 
\[\langle ABC \rangle = \langle A B\rangle \otimes_{\langle B \rangle} \langle BC \rangle .\]
$\langle X \rangle$ denotes the substructure generated by $X$.
We show that this is a stationary independence relation in $M_0$ in the sense of  K.Tent and M.Ziegler \cite{TZ12b}.  We realize, that {\bf Mon} is a consequence of the remaining properties in general.
If furthermore the age of $M_0$ is uniformly locally finite, then we have free amalgamation for the  substructures of the monster model $\C$ of $Th(M_0)$ and it gives a stationary independence relation for the substructures of $\C$. That means we have all properties of non-forking in a stable theory except local character and boudedness is replaced by the stronger  property stationarity. But the examples we discuss below have the tree property of the second kind.

We use a new idea, developed by Isabel M\"uller in  \cite{Mue13}. Let $M_0$ be a \Frai limit as above.  She proved, that the existence of a stationary independence relation  for finite subsets of $M_0$ in the sense of K.Tent und M.Ziegler \cite{TZ12b} implies the universality of $Aut(M_0)$ for all $Aut(M)$ where $M$ is a substructure of $M_0$. The stationary independence relation is used to reconstruct the \Frai limit $M_0$ from a given substructure $M$ using so-called Katetov extensions. In general the embedding of $M$ in $M_0$ will change. 
We apply S.M\"uller's result to \Frai limits $M_0$ with free amalgamation and obtain the universality of $Aut(M_0)$ for all groups $Aut(M)$, where $M \subseteq M_0$  (chapter 3).

In chapter 4 we reprove the existence of the free amalgam for the class of c-nilpotent graded Lie algebras over a  field $K$ in a language with extra predicates for the graduation. Unfortunately  the proof of this result in \cite{Bau04} is incorrect. The existence of the free amalgam for all graded Lie algebras over a given field  follows. We get a \Frai limit $M_0$ of the finitely generated c-nilpotent graded Lie algebras over a finite field $K$. Then the free amalgam  gives  a stationary independence relation in $M_0$. $Aut(M_0)$  is universal for all $ \{ Aut(M) : M \subseteq M_0 \}$.  For c-nilpotent graded assoziative algebras even amalgams do not exists in general, as a counterexample in chapter 5 shows.

In the last chapter we consider c-nilpotent groups of exponent $p > c$ with extra predicates for a central Laszard series. As shown in \cite{Bau04}  the results for graded Lie algebras imply the existence of the free amalgam  for all these groups. The  \Frai limit $G^U_0$ exists for these groups and the free amalgam gives a stationary independence relation. Hence $Aut(G^U_0)$ is universal for $\{Aut(G^U) : G^U \subseteq G^U_0 \}$. Let $G_0$ be the reduct of $G^U_0$ to the language of group theory. Using the lower central series we can transform each c-nilpotent group of exponent $p > c$ to a structure of the extended language. Hence $G_0$ is universal for all these groups. Since the upper and lower central series in $G_0$ coincide, the extra predicates are 0-definable in $G_0$. Therefore $Aut(G_0)$ is universal for all $Aut(G)$ where $G$ is a at most countable c-nilpotent group of exponent $p > c$.   Note that the elementary theories of $M_0$ (Lie algebras), $G^U_0$, and $G_0$ have the tree property of the second kind (see \cite{Bau13}).

I would like to thank Martin Ziegler for helpful discussions of the results, especially for a shorter proof of Lemma \ref{mon}.

\section{Free  Amalgamation}
Let $\K$ be a class of finitely generated $L$-structures. $\K$ is the age (or skeleton) of a $L$-structure $M$, if $\K$ is the class of all $L$-structures that are isomorphic to a finitely generated substructure of $M$. In this paper $L$ and $\K$ are always countable.

\begin{definition}
$M$ is $\K$-saturated, if $\K$ is the age of $M$ and if for all $B$, $A$ in $\K$ and all embeddings 
$f_0: B \to M$, $f_1: B \to A$ there is an embedding $g: A \to M$ such that $f_0 = g \circ f_1$.
\end{definition}

Then the following is well-known:

\begin{fact}\label{fact2.1}
Countable $\K$-saturated structures  are isomorphic. Let $M_0$ be a $\K$-saturated structure. It is ultrahonogeneous. That means an isomorphism between finitely generated substructures of $M_0$ can be extened to an automorphism. Conversely countable ultrahomogeneous structures $M_0$ are $\K$ - saturated, where $\K$ is the age of  $M_0$.
$M_0$ is $\K$-universal: Every countable L-structure with an age included in $\K$ can be embedded.
\end{fact}

This fact implies that the quantifier free n-type of an n-tuple implies the full n-type in $M_0$. But this is not quantifier elimination for $Th(M_0)$.

\begin{fact}\label{fact2.2}
There is a countable $\K$-saturated $L$-structure $M_0$ if and only if $\K$ has the following properties:

\begin{description}

\item[HP] {\em Heredity Property} For $A$ in $\K$ we have $age(A) \subseteq \K$. 

\item[JEP] {\em Joint Embedding Property} For $A$ and $C$ in $\K$ there are some $D \in \K$ and embeddings $f_0 : A \to D$ and $f_1: C \to D$.

\item[AP] {\em Amalgamation Property} Assume  $g_0: B \to A$ and $g_1: B \to C$ are embeddings for
$A, B, C \in \K$. Then there are some $D$ in $\K$ and embeddings $f_0: A \to D$ and $f_1: C \to D$ such that $f_0 \circ g_0 = f_1 \circ g_1$ for $B$.

\end{description}
\end{fact}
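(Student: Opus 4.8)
The plan is to establish both directions of this classical Fra\"{\i}ss\'e correspondence; throughout we use that, $L$ being countable, every finitely generated $L$-structure is at most countable and $\K$ has only countably many isomorphism types.

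\emph{($\Rightarrow$)} Suppose $M_0$ is countable and $\K$-saturated, so $\K$ is the age of $M_0$ by definition. HP is immediate: if $A\in\K$ then $A$ is isomorphic to a finitely generated substructure of $M_0$, and every finitely generated substructure of $A$ is then a finitely generated substructure of $M_0$, hence in $\K$. For JEP, given $A,C\in\K$ I would fix embeddings $A\hookrightarrow M_0$ and $C\hookrightarrow M_0$ (available since $A,C$ are, up to isomorphism, finitely generated substructures of $M_0$) and take $D$ to be the substructure of $M_0$ generated by the union of the two images; $D$ is finitely generated, so $D\in\K$, and corestricting the two embeddings to $D$ witnesses JEP. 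For AP, given $g_0:B\to A$ and $g_1:B\to C$ with $A,B,C\in\K$, I would fix an embedding $h:A\hookrightarrow M_0$; then $h\circ g_0:B\to M_0$ and $g_1:B\to C$, so $\K$-saturation applied with these two maps produces $g:C\to M_0$ with $h\circ g_0=g\circ g_1$, and then the substructure $D$ of $M_0$ generated by $h(A)\cup g(C)$ lies in $\K$ and the corestrictions of $h$ and $g$ to $D$ form the required amalgam.

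\emph{($\Leftarrow$)} Assume $\K$ is nonempty and has HP, JEP and AP. I would build an increasing chain $N_0\subseteq N_1\subseteq\cdots$ of members of $\K$ and put $M_0=\bigcup_i N_i$. There are only countably many potential ``tasks'' of two kinds: (i) embed a given $A\in\K$ somewhere in the chain, and (ii) given a finitely generated substructure $B$ of some $N_i$ and an embedding $f:B\to A$ with $A\in\K$, produce an embedding of $A$ into a later $N_m$ extending $f$. I would fix a bookkeeping enumeration of the tasks so that each one that ever arises is eventually attended to, start with an arbitrary $N_0\in\K$, and at stage $i+1$ handle the next task: for type (i) apply JEP to $N_i$ and $A$; for type (ii) apply AP to the inclusion $B\hookrightarrow N_i$ and to $f$, obtaining some $D\in\K$ with embeddings of $N_i$ and of $A$ into $D$ that agree on $B$. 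In both cases, after relabeling the new structure so that $N_i$ sits inside it as a substructure, call it $N_{i+1}$.

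It remains to verify that $M_0$ works. It is countable, being a countable union of countable structures. Its age is $\K$: any finitely generated substructure of $M_0$ is contained in some $N_i$, hence in $\K$ by HP, and conversely every $A\in\K$ embeds into $M_0$ by the type-(i) tasks. For $\K$-saturation, given $B,A\in\K$ with $f_0:B\to M_0$ and $f_1:B\to A$, the finitely generated structure $f_0(B)$ lies in some $N_i$, and the type-(ii) task attached to $f_0(B)\subseteq N_i$ and $f_1\circ f_0^{-1}:f_0(B)\to A$ was attended to at some stage, yielding an embedding $A\to N_m\subseteq M_0$ whose composite with $f_1$ equals $f_0$; this $A\to M_0$ is the desired $g$. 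The one genuinely delicate point in this otherwise routine argument is the bookkeeping: type-(ii) tasks refer to finitely generated substructures of the $N_i$, which only become available as the chain grows, so one must dovetail the enumeration (via a pairing function on stages and newly-appearing requirements) so that every task eventually gets served — this, together with the relabeling convention turning each use of JEP or AP into an honest extension of the current $N_i$, is the heart of the construction.
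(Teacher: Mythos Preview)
Your argument is the standard Fra\"{\i}ss\'e construction and is essentially correct. Note, however, that the paper does not prove this statement at all: it is recorded as a \emph{Fact} (the classical Fra\"{\i}ss\'e theorem) and simply quoted, so there is no ``paper's own proof'' to compare against. One small point: you write that, $L$ being countable, $\K$ has only countably many isomorphism types; this does not follow from countability of $L$ alone (there can be continuum many countable finitely generated $L$-structures), but it is harmless here because the paper explicitly imposes ``$L$ and $\K$ are always countable'' as a standing hypothesis.
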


$M_0$ in \ref{fact2.2} is called the \Frai limit of $\K$. By \ref{fact2.1} it is unique up to isomorphisms.  

\begin{definition}  {\bf APS} We have the strong amalgamation property  for $\K$ if in {\bf AP} 
$f_0(A) \cap f_1(C) = f_0 \circ g_0 (B) = f_1 \circ g_1(B)$ holds. 
\end{definition}

\begin{fact}\label{fact2.3}
Assume $L$ is finite, $\K$ is uniformly locally finite, and a $\K$-saturated $L$-structure $M_0$ exists. Then $Th(M_0)$ is $\aleph_0$-categorical and allows the elimination of quantifiers.
\end{fact}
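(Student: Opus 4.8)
The plan follows the standard route: uniform local finiteness makes quantifier-free types effectively finite, and $\aleph_0$-categoricity then comes from the Ryll--Nardzewski theorem while quantifier elimination comes from the fact that in an $\aleph_0$-categorical theory every complete type is isolated.

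First I would observe that, $L$ being finite and $\K$ uniformly locally finite, for each $n$ there is a bound $N(n)$ on the cardinality of the members of $\K$ generated by at most $n$ elements, and a finite language admits only finitely many $L$-structures of cardinality $\le N(n)$ up to isomorphism; hence there are, up to isomorphism, only finitely many $\le n$-generated substructures of $M_0$. The quantifier-free $n$-type of a tuple $\bar a$ from $M_0$ is determined by the isomorphism type of the marked substructure $(\langle\bar a\rangle,\bar a)$, and by Fact \ref{fact2.1} (ultrahomogeneity) two tuples with the same quantifier-free type lie in the same $\mathrm{Aut}(M_0)$-orbit, since the induced isomorphism $\langle\bar a\rangle\to\langle\bar b\rangle$ extends to an automorphism of $M_0$. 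Consequently $\mathrm{Aut}(M_0)$ has only finitely many orbits on $M_0^n$ for every $n$, so by the Ryll--Nardzewski theorem $T:=Th(M_0)$ is $\aleph_0$-categorical.

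For quantifier elimination I would argue as follows. By $\aleph_0$-categoricity the space $S_n(T)$ of complete $n$-types is finite, hence so is its continuous image, the space $S^{\mathrm{qf}}_n(T)$ of complete quantifier-free $n$-types. The countable model of an $\aleph_0$-categorical theory is $\aleph_0$-saturated, so $M_0$ realizes every complete $n$-type of $T$; since by Fact \ref{fact2.1} two realizations in $M_0$ of one quantifier-free type have the same complete type, the restriction map $S_n(T)\to S^{\mathrm{qf}}_n(T)$ is injective, and being always surjective it is a bijection. As $S^{\mathrm{qf}}_n(T)$ is finite it is discrete, so each of its points, being clopen, has the form $[\psi]:=\{\,q:\psi\in q\,\}$ for a single quantifier-free formula $\psi(\bar x)$; transporting along the bijection, every complete $n$-type $p$ is isolated by a quantifier-free formula $\psi_p$. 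Finally, given any $L$-formula $\varphi(\bar x)$, only finitely many complete $n$-types $p_1,\dots,p_k$ contain $\varphi$, and $T\models\varphi\leftrightarrow\bigvee_{i=1}^k\psi_{p_i}$, because $T\models\psi_{p_i}\to\varphi$ and every realization of $\varphi$ realizes some $p_i$. This proves the elimination of quantifiers.

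The only place where uniform local finiteness is essential is the finiteness of the quantifier-free type spaces --- equivalently, that a complete quantifier-free type is pinned down by a single quantifier-free formula; without it one cannot express quantifier-freely that a tuple exhausts a given (possibly infinite) generated substructure, and neither conclusion need hold. Everything else is routine, resting on Fact \ref{fact2.1} and the two classical inputs (Ryll--Nardzewski, and the $\aleph_0$-saturation of the countable model of an $\aleph_0$-categorical theory); so the substantive step is the passage, in the second paragraph, from uniform local finiteness to finitely many orbits.
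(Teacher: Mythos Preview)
Your argument is correct and follows the standard route via Ryll--Nardzewski and the orbit count coming from uniform local finiteness plus ultrahomogeneity. Note, however, that the paper states this result as a \emph{Fact} without proof; there is no proof in the paper to compare against, so your write-up simply supplies what the author assumed as folklore.
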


For the next considerations we assume again, that $L$ is countable and $\J$ is a  class of  $L$-structures .

\begin{definition}\label{freeamal} Let  $A, B, C, D \in \J$ and assume that $B$ is a common substructure of
$A$ and $C$. If $D$ is generated by $A$ and $C$ with $A \cap C = B$, then 
$D$ is the  free amalgam of $A$ and $C$ over $B$ (short $D = A \otimes_B C$) in $\J$, if for  all  homomorphisms
$f: A \to E$ and $g:C \to E$  into some $E \in \J$ with 
$f(b) = g(b)$ for $b \in B$ there is a homomorphism $h: D \to E$ that extends $f$ and $g$.

$\J$ is closed under free amalgamtion, if for  $A,B,C \in \J$ and embeddings $g_0 : B \to A$ and $g_1 : B \to C$, there exists a  free amalgam $A' \otimes_{B'} C'$  in $\J$ and isomorphisms $f_0 : A \to A'$ and $f_1 : C \to C'$ , such that $f_0 \circ g_0(b) = f_1 \circ g_1(b)$ for $b \in B$ maps $B$ onto $B'$ . 
\end{definition}

The free amalgam is a strong amalgam by definition.
Note that $A \otimes_B C$ is uniquely determined up to isomorphisms, if it exists. 
If $L$ is relational and $\J$ is the class of all $L$-structures, then the  free amalgam exists. Its domain is the union of $A$ and $C$ with intersection $B$ and the only relations are the old relations from $A$ and $C$.
In this paper we will consider  free amalgams in the class graded Lie algebras over  fields and in the class of c-nilpotent groups of exponent p ($c < p$) with extra predicates for a central Lazard series.
\medskip

We add new constant symbols $e_a$ for $a \in A\setminus B$ $e_b$ for $b \in B$ and $e_c$ for  $c \in C\setminus B$ to the language $L$ and assume that we have the same symbols for the elements of $B$ as a substructure of $A$ and of $C$ respectively. Using these constant symbols we define the diagrams $Dia(A)$ and $Dia(C)$ - the sets of all atomic sentences and negated atomic sentences in this enriched language that are true in $A$ respectively in $C$, if we  interprete the new constant symbols by the elements they represent. 

\begin{definition}\label{D: sigma}
Let $\Sigma_{\J} (A,B,C)$ be the union of $Dia(A)$ and $Dia(C)$ with all negated atomic sentences
$e_a \not= e_c$ for $a \in (A \setminus B)$ and $c \in (C \setminus B)$ and all negated atomis sentences
 $\neg\phi(\bar{e}_{\bar{a}}, \bar{e}_{\bar{b}}, \bar{e}_{\bar{c}})$, where
$\bar{a} \subseteq A $
$\bar{b} \subseteq B$, and $\bar{c} \subseteq C $ and there are
homomorphisms $f$ and $g$ of $A$ and $C$ repectively into some $E \in \J$ with $f(b) = g(b)$ for  $b \in B$, such that $E \models \neg\phi(\bar{f}(\bar{a}, \bar{f}(\bar{b}), \bar{g}(\bar{c}))$.

\end{definition}

\begin{lemma}\label{sigma}
\begin{enumerate}

\item Assume $\J$ is closed under substructures. For $A,B,C \in \J$ the  free amalgam $A\otimes_B C$ exists in $\J$  if and only if $\Sigma_{\J}(A,B,C)$ has a model in $\J$.

\item Let $\J$ be an $\forall$-elementary class such that substructures of finitely generated structures in $\J$ are again finitely generated. Then $\J$ is closed under  free amalgamation \iff the finitely generated structures in $\J$ are closed under free amalgamation. 

\item Let $L$ be finite and $\K$ be countable class of finitely generated L-structures that are uniformly locally finite. Assume a $\K$-saturated model $M_0$ exists. Let $\J$ be the class of the substructures of the models of $Th(M_0)$.  
If $\K$ is closed under free amalgamation, then $\J$ is closed under free amalgamation.

\end{enumerate}
\end{lemma}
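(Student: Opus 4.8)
The plan is to prove the three parts in order, using part (1) as the workhorse for part (3) and a compactness argument for part (2).

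For part (1), I would show the two directions separately. If the free amalgam $D = A \otimes_B C$ exists in $\J$, then interpreting each new constant $e_x$ by the element it names makes $D$ into a model of $Dia(A) \cup Dia(C)$; moreover the defining universal property forces $D \models e_a \neq e_c$ (otherwise the identity maps would have to glue $a$ and $c$, contradicting $A \cap C = B$ inside $D$) and, for each negated atomic sentence $\neg\phi$ thrown into $\Sigma_{\J}(A,B,C)$ because of witnesses $f, g$ into some $E$, the induced homomorphism $h : D \to E$ together with $h \models \phi \Rightarrow E \models \phi$ shows $D \models \neg\phi$. So $D$ is a model of $\Sigma_{\J}(A,B,C)$ lying in $\J$. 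Conversely, if $\Sigma_{\J}(A,B,C)$ has a model $N \in \J$, then the constants $e_x$ name elements of $N$; the diagram conditions make $x \mapsto e_x^N$ a homomorphism on each of $A$ and $C$, the clauses $e_a \neq e_c$ make these two maps agree exactly on $B$ and be injective off $B$, so the substructure $D$ of $N$ generated by the images of $A$ and $C$ satisfies $A \cap C = B$; since $\J$ is closed under substructures, $D \in \J$. It remains to check the universal property: given $f : A \to E$, $g : C \to E$ agreeing on $B$, I must produce $h : D \to E$. The key point is that the generators of $D$ are the $e_x^N$, so $h$ is forced on generators by $h(e_a^N) = f(a)$, $h(e_c^N) = g(c)$, $h(e_b^N) = f(b) = g(b)$, and to see this is well-defined and a homomorphism one uses exactly the negated-atomic clauses of $\Sigma_\J$: any atomic relation among the $e_x^N$ that held in $N$ but failed under the assignment to $E$ would have put the corresponding $\neg\phi$ into $\Sigma_\J$ (with $f, g, E$ as witnesses), contradicting $N \models \Sigma_\J$. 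I expect this well-definedness argument to be the main obstacle, since one must be careful that $\Sigma_\J$ collects \emph{all} the relevant negated atomic sentences and that homomorphisms, not just embeddings, are what the definition requires.

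For part (2), the direction ``$\J$ closed under free amalgamation $\Rightarrow$ finitely generated structures closed under free amalgamation'' is immediate from Definition \ref{freeamal} restricted to finitely generated $A, B, C$, using the hypothesis that substructures of finitely generated structures in $\J$ are finitely generated (so $B \in \J$ and everything stays in the finitely generated part). For the converse, given arbitrary $A, B, C \in \J$ with embeddings of $B$, I would write $A$ and $C$ as unions of directed systems of finitely generated substructures containing (a copy of) a common finitely generated substructure of $B$, form the finite free amalgams, and take the direct limit; the point is that the free amalgam commutes with directed colimits of the relevant shape, and that $\J$, being $\forall$-elementary, is closed under unions of chains, so the colimit lies in $\J$. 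Alternatively, and more cleanly, I would use part (1) together with compactness: $\Sigma_\J(A,B,C)$ has a model in $\J$ iff every finite subset does (here using that $\J$ is $\forall$-elementary, hence axiomatized by a set $T_\forall$ of universal sentences, so ``model in $\J$'' means ``model of $T_\forall \cup \Sigma_\J$''), and every finite subset of $\Sigma_\J(A,B,C)$ mentions only finitely many constants, hence is contained in $\Sigma_\J(A_0, B_0, C_0)$ for finitely generated $A_0 \subseteq A$, etc., which has a model in $\J$ by the finitely-generated case and part (1); a routine check shows the finite free amalgams assemble correctly. I would go with the compactness route.

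For part (3), I would apply part (2): by Fact \ref{fact2.3}, $Th(M_0)$ is $\aleph_0$-categorical and has quantifier elimination, and the class $\J$ of substructures of its models is $\forall$-elementary (it is the models of the universal consequences of $Th(M_0)$); uniform local finiteness of $\K$ together with $L$ finite guarantees that finitely generated substructures are finite, so substructures of finitely generated structures in $\J$ are again finitely generated. Thus by part (2) it suffices to show the finitely generated structures in $\J$ are closed under free amalgamation. But a finitely generated structure in $\J$ is a finite substructure of a model of $Th(M_0)$, hence—by $\aleph_0$-categoricity and quantifier elimination, so that finite substructures of models are exactly the members of $\K$—lies in $\K$. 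The hypothesis gives that $\K$ is closed under free amalgamation, and one must check that the free amalgam formed inside $\K$ is still the free amalgam inside the larger class $\J$; this follows because quantifier elimination for $Th(M_0)$ means a homomorphism from a member of $\K$ into a substructure of a model of $Th(M_0)$ factors appropriately, so the universal property tested against all $E \in \J$ reduces to testing against $E \in \K$. That last compatibility check—free amalgam in $\K$ equals free amalgam in $\J$—is where I expect the only real subtlety in part (3) to lie, and it is handled by part (1) applied in both classes: $\Sigma_\K(A,B,C) = \Sigma_\J(A,B,C)$ because every homomorphism witnessing a negated atomic clause can be replaced by one into a member of $\K$ (embed the finitely generated image into $M_0$).
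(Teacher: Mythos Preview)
Your proposal is correct and follows essentially the same route as the paper: part~(1) by direct verification that the free amalgam models $\Sigma_{\J}$ and conversely that the substructure generated by the constants in any model is a free amalgam; part~(2) via compactness, reducing a finite fragment of $\Sigma_{\J}(A,B,C)$ to a finitely generated situation; and part~(3) by invoking part~(2) once one knows the finitely generated structures in $\J$ coincide with $\K$.

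One remark on part~(3): the compatibility issue you flag---whether the free amalgam in $\K$ remains a free amalgam in $\J$---is real but is handled more simply than via $\Sigma_{\K} = \Sigma_{\J}$. In the proof of part~(2) the test structure $E \in \J$ can always be replaced by the substructure generated by the images of the finitely generated $A^1, C^1$, which is again finitely generated by the standing hypothesis on $\J$; so the universal property against all $E \in \J$ reduces automatically to the universal property against finitely generated $E$. The paper does not spell this out (it simply writes ``apply (2)''), so your extra care here is warranted, but the resolution is immediate once noted.
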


\begin{proof}
\begin{enumerate}
\item $A \otimes_B C$ models $\Sigma_{\J}(A,B,C)$. If $M$ is a model of
$\Sigma_{\J}(A,B,C)$ in $\J$, then let $D$ be the substructure of $M$ generated by the interpretations of the constant symbols $e_a, e_b, e_c$. $D$ is in $\J$ by assumption. $D$ is a strong amalgam of $A$ and $C$ over $B$. $D$ is free, since 
$\Sigma_{\J}(A,B,C)$ contains  the set of conditions we need to extend every given pair of homomorphisms.

\item To show the non-trivial direction it is sufficient to prove that $\Sigma_{\J}(A,B,C)$ is consistent with $Th(\J)$, since $\J$ is closed under substructures. Because $\J$ is elementary we can use compactness. Let $\Sigma_0$ be a finte subset of   $\Sigma_{\J}(A,B,C)$. Let $A^0$ be the substructure of $A$ generated by all elements of $A$, that occure in a formula of $\Sigma_0$ and $C_0$ the substructure of $C$ generated by all elements of $C$, that occure in a formula  of $\Sigma_0$. Let $B^1$ be $\langle (B \cap A^0), (B \cap C^0)\rangle$. $B^1 \subseteq B$.  By assumption $B^1$ is finitely generated. Then $A^1 = \langle A^0 , B^1 \rangle$ and 
$C^1 = \langle C^0 , B^1 \rangle$ are finitely generated. $B^1$ is a common substructure of $A^1$ and  
$C^1$. Since $\J$ is $\forall$- elementary $A^1$, $B^1$, and $C^1$ are in $\J$. By assumption 
$A^1 \otimes_{B^1} C^1 = D^1$ exists in $\J$. We claim that $D^1$ is a model of $\Sigma_0$.
The formulas from $Dia(A)$ and $Dia(C)$ in $\Sigma_0$ are fulfilled in $D^1$. Assume we have $\neg\phi(\bar{e}_{\bar{a}}, \bar{e}_{\bar{b}}, \bar{e}_{\bar{c}})$ in $\Sigma_0$, where
$\bar{a} \subseteq A \setminus B$,
$\bar{b} \subseteq B$, and $\bar{c} \subseteq C \setminus B$ and furthermore
homomorphisms $f$ and $g$ of $A$ and $C$ repectively into some $E \in \J$ with $f(b) = g(b)$ for  $b \in B$, such that $E \models \neg\phi(\bar{f}(\bar{a}, \bar{f}(\bar{b}), \bar{g}(\bar{c}))$.  If we consider the restriction of $f$ to $A^1$ and of $g$ to $C^1$, then $f(b) = g(b)$ for $b \in B^1$. By the definition of the free amalgam 
\[D^1 \models \neg\phi(\bar{e}_{\bar{a}}, \bar{e}_{\bar{b}}, \bar{e}_{\bar{c}}),\]
as desired. Formulas $e_a \not= e_c$ from $\Sigma_0$ are fulfilled in $D^1$.

\item $\J$ is $\forall$ - elementary. Since $\K$ is uniformly locally finite and $L$ is finite, $\K$ is the class of finite structures in $\J$. We apply (2). 

\end{enumerate}
\end{proof}

\section{Stationary independence and universal automorphism groups} 
Let $L$ be countable.
K.Tent and M.Ziegler  defined a stationary independence relation for the investigation auf automorphism groups in \cite{TZ12b}.  We consider  finite subsets $A, B, C ,D$ of a $L$-structure $M$.

\begin{definition}\label{statind}  
A relation $A \ind_B C$ for finite subsets of $M$ is called  a stationary independence relation  in $M$ if it fulfils the following properties. 
\begin{description}
\item [Inv] {\em Invariance} $A\ind_B C$ depends only on the type of $A,B,C$.

\item[Mon]{\em Monotonicity} $A \ind_B CD$ implies $A \ind_B C$ and $A\ind_{BC} D$.

\item[Trans] {\em Transitivity} $A \ind_B C$ and $A \ind_{BC} D$ imply $A \ind_B CD$.

\item[Sym] {\em Symmetry} $A \ind_B C$ if and only if $C\ind_B A$.

\item[Ex] {\em Existence} For $A, B, C$ there is some $A'$ in $M$ such that $\tp(A/B) = \tp(A'/B)$ and 
$A' \ind_B C$.

\item[Stat] {\em Stationarity}  If  $\tp(A/B) = \tp((A'/B)$, $ A \ind_B C$, and $A' \ind_B C$, then 
$\tp(A/BC) = \tp(A'/BC)$.
\end{description}
\end{definition}

\begin{lemma}\label{mon}
Let  $A \ind_B C$ be a relation on finite subsets of $M$, that satisfies all properties of a stationary independence relation except {\bf Mon}. Then {\bf Mon} follows. 
\end{lemma}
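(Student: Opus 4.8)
The plan is to prove the two halves of \textbf{Mon} --- that $A\ind_B CD$ implies $A\ind_B C$, and that $A\ind_B CD$ implies $A\ind_{BC}D$ --- one after the other, in each case using \textbf{Ex} to manufacture a realization $A'$ of $\tp(A/B)$ that is visibly independent in the desired stronger sense, and then feeding $A'$ and $A$ into \textbf{Stat} over $B$ with the \emph{enlarged} parameter set $CD$ to conclude $\tp(A'/BCD)=\tp(A/BCD)$; the extra information about $A'$ then transfers back to $A$ by \textbf{Inv}. The point of taking $CD$ rather than $C$ as the parameter set in \textbf{Stat} is that $A\ind_B CD$ is precisely the hypothesis we are given, so \textbf{Stat} is applicable; this is what breaks the apparent circularity (one cannot apply \textbf{Stat} directly with parameter $C$, since that would already presuppose $A\ind_B C$).

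For the first half, assume $A\ind_B CD$. By \textbf{Ex} over $B$ pick $A'$ with $\tp(A'/B)=\tp(A/B)$ and $A'\ind_B C$. To upgrade this to independence from a copy of $CD$, apply \textbf{Ex} over $BC$ to obtain $D'$ with $\tp(D'/BC)=\tp(D/BC)$ and $D'\ind_{BC}A'$; then $A'\ind_{BC}D'$ by \textbf{Sym}, and $A'\ind_B CD'$ by \textbf{Trans}. Since $\tp(D'/BC)=\tp(D/BC)$, homogeneity of $M$ provides an automorphism fixing $B$ and $C$ pointwise and carrying $D'$ to $D$; replacing $A'$ by its image and using \textbf{Inv} to transport the independence relations along this automorphism, we may assume outright that $D'=D$, so that now $\tp(A'/B)=\tp(A/B)$, $A'\ind_B C$ and $A'\ind_B CD$. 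Applying \textbf{Stat} over $B$ with parameter set $CD$ to $A'$ and $A$ gives $\tp(A'/BCD)=\tp(A/BCD)$, hence $\tp(A'/BC)=\tp(A/BC)$; together with $A'\ind_B C$ and \textbf{Inv} this yields $A\ind_B C$.

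For the second half, assume $A\ind_B CD$; the first half already supplies $A\ind_B C$. By \textbf{Ex} over $BC$ pick $A'$ with $\tp(A'/BC)=\tp(A/BC)$ and $A'\ind_{BC}D$. Restricting types gives $\tp(A'/B)=\tp(A/B)$, and since $\tp(A'C/B)=\tp(AC/B)$ and $A\ind_B C$ we get $A'\ind_B C$ by \textbf{Inv}; then \textbf{Trans} gives $A'\ind_B CD$. Now \textbf{Stat} over $B$ with parameter set $CD$, applied to $A'$ and $A$, yields $\tp(A'/BCD)=\tp(A/BCD)$, hence $\tp(A'D/BC)=\tp(AD/BC)$; with $A'\ind_{BC}D$ and \textbf{Inv} this gives $A\ind_{BC}D$, completing the proof. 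The only nonformal ingredient is the use of homogeneity of $M$ in the first half to replace $D'$ by $D$ (which is exactly the setting in which \textbf{Ex} and \textbf{Stat} are meant to be used); everything else is bookkeeping with the five axioms, and \textbf{Mon} is never invoked, so the argument is not circular. I expect the write-up to be only a few lines longer than this sketch, with the homogeneity step being the one place to state carefully.
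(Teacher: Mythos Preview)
Your argument is essentially correct, but the first half invokes homogeneity of $M$ to move $D'$ back to $D$, and homogeneity is \emph{not} among the hypotheses of the lemma (the statement is for an arbitrary $M$, not a Fra\"{\i}ss\'e limit). This is a genuine, if easily repaired, gap: the axioms \textbf{Inv}, \textbf{Sym}, \textbf{Trans}, \textbf{Ex}, \textbf{Stat} alone are supposed to suffice.

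The repair is simple and is exactly what the paper does: in your first half, instead of applying \textbf{Ex} over $BC$ to move $D$ (producing a $D'$ you then have to push back), apply \textbf{Ex} over $BC$ to move $A'$, obtaining $A''$ with $\tp(A''/BC)=\tp(A'/BC)$ and $A''\ind_{BC}D$. Then \textbf{Inv} transports $A'\ind_B C$ to $A''\ind_B C$ (since $\tp(A''C/B)=\tp(A'C/B)$), \textbf{Trans} gives $A''\ind_B CD$, and \textbf{Stat} over $B$ with parameter $CD$ (using $\tp(A''/B)=\tp(A/B)$ and the hypothesis $A\ind_B CD$) yields $\tp(A''/BCD)=\tp(A/BCD)$. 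Now \textbf{Inv} transfers \emph{both} $A''\ind_B C$ and $A''\ind_{BC}D$ to $A$ in one stroke. No homogeneity is needed, and both halves of \textbf{Mon} come out simultaneously from the single witness $A''$.

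Note that your second half is already this argument verbatim (your $A'$ there is the paper's $A''$); you just did not notice that the same $A'$ already delivers the first half as well, which is why you took the detour through $D'$ and homogeneity. The paper's proof is therefore strictly shorter: two applications of \textbf{Ex} on the $A$-side, one of \textbf{Inv}, one of \textbf{Trans}, one of \textbf{Stat}, one final \textbf{Inv}.
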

\begin{proof}
We asume $A \ind_B CD$. Applying {\bf Ex} we get $A'$, such that $A' \ind_B C$ and  $tp(A'/B) = tp(A/B)$. Again by {\bf Ex} there is some $A''$ such that $tp(A''/BC) = tp(A'/BC)$ and $A'' \ind_{BC} D$. By {\bf Inv} $A'' \ind_B C$. By $\bf Trans$ $A'' \ind_B CD$. Since $tp(A''/B) = tp(A/B)$
{\bf Stat} implies $tp(A''/BCD) = tp(A/BCD)$. The assertion follows from {\bf Inv}.
\end{proof}

I. M\"uller combined the existence of a stationary independence relation with Kat\u{e}tov's construction \cite{Mue13}.
She proved:

\begin{theorem}\label{mueller}
If $M_0$ is a \Frai limit and there exists a stationary independence relation in $M_0$, then $Aut(M_0)$ is universal for all $Aut(N)$, where $N$ is a substructure of $M_0$. 
\end{theorem}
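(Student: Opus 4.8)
Unwinding the definition, it suffices to produce, for every substructure $N$ of $M_0$, an embedding of groups $Aut(N)\hookrightarrow Aut(M_0)$. The plan is to follow the strategy of \cite{Mue13}: use the stationary independence relation to build a \emph{Kat\u{e}tov functor} and then iterate it $\omega$ times starting from $N$. I would work in the category $\mathcal{C}$ whose objects are the at most countable $L$-structures whose age is contained in the age $\K$ of $M_0$ and whose morphisms are embeddings. By Fact~\ref{fact2.1} and the remark after it, $M_0$ is $\K$-universal, so every object of $\mathcal{C}$ embeds into $M_0$, and in $M_0$ the quantifier-free type of a tuple determines its full type; using this I transport $\ind$ from $M_0$ to all of $\mathcal{C}$. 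Concretely, for a finite tuple $\bar b$ in an object $A$ and a one-element extension $q=\tp(a/\bar b)$ with $\langle\bar b a\rangle\in\K$, \textbf{Ex} (applied inside $M_0$) yields a realisation of $q$ independent from a copy of $A$ over $\bar b$, and by \textbf{Stat} its type over $A$ is uniquely determined.

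First I would construct a one-step extension $\eta_A\colon A\hookrightarrow K(A)$ for each object $A$: adjoin to $A$, simultaneously for every pair $(\bar b,q)$ as above, an element $a_{(\bar b,q)}$ realising $q$ with $a_{(\bar b,q)}\ind_{\bar b}A$. The point is that infinitely many such requirements can be imposed at once --- one stacks the individual stationary extensions using \textbf{Mon} and \textbf{Trans} --- and the type of the resulting configuration over $A$ is then pinned down by \textbf{Stat}. Let $K(A)$ be generated by $A$ together with all the $a_{(\bar b,q)}$. Since each generator arises from a stationary amalgam over a finite base with a member of $\K$, finitely generated substructures of $K(A)$ still lie in $\K$, so $K(A)$ is again an object of $\mathcal{C}$.

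Next I would upgrade $A\mapsto K(A)$ to a functor and $\eta$ to a natural transformation $\mathrm{Id}\Rightarrow K$. For an embedding $f\colon A\to A'$ set $K(f)(a)=f(a)$ for $a\in A$ and $K(f)(a_{(\bar b,q)})=a_{(f\bar b,\,f_*q)}$, where $f_*q$ is the push-forward type. That $K(f)$ is a well-defined embedding, that $K(g\circ f)=K(g)\circ K(f)$ and $K(\mathrm{id})=\mathrm{id}$, and that $K(f)\circ\eta_A=\eta_{A'}\circ f$, all reduce to \textbf{Inv} (the construction depends only on types) together with the uniqueness clause \textbf{Stat}. I expect this step --- verifying that the simultaneous stationary extension is both consistent and compatible with every embedding --- to be the main obstacle; the remainder is bookkeeping.

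Finally I would iterate. Put $N_0=N$ (an object of $\mathcal{C}$ since $N\subseteq M_0$), $N_{n+1}=K(N_n)$ with bonding embedding $\eta_{N_n}$, and let $N_\omega$ be the direct limit, i.e. the increasing union $\bigcup_n N_n$. Since $K(A)$ realises every one-element extension of every finite subset of $A$, an induction on the number of generators shows that $K^{(n)}$ realises every $n$-element extension, so $N_\omega$ is $\K$-saturated; moreover $\mathrm{age}(N_\omega)=\K$ (the inclusion $\supseteq$ because a $\K$-saturated structure is $\K$-universal, the inclusion $\subseteq$ from the paragraphs above), so by Fact~\ref{fact2.1} I may fix an isomorphism $N_\omega\cong M_0$. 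Now any $\sigma\in Aut(N)$ is in particular a $\mathcal{C}$-morphism $N_0\to N_0$, hence $K^{(n)}(\sigma)\in Aut(N_n)$ for all $n$; by functoriality and naturality of $\eta$ these maps are compatible with the bonding embeddings, so they glue to an automorphism $\hat\sigma$ of $N_\omega$, and via the fixed isomorphism to an automorphism of $M_0$. Functoriality gives $\widehat{\sigma\tau}=\hat\sigma\hat\tau$ and $\widehat{\mathrm{id}}=\mathrm{id}$, so $\sigma\mapsto\hat\sigma$ is a group homomorphism $Aut(N)\to Aut(M_0)$, and it is injective since $\hat\sigma$ agrees with $\sigma$ on $N_0=N$. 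Thus $Aut(N)$ embeds as a subgroup of $Aut(M_0)$, which is the asserted universality. (With a little more care the same map is a homeomorphic embedding for the topologies of pointwise convergence, but the statement asks only for the abstract group.)
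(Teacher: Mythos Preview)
The paper does not give its own proof of this theorem; it merely quotes it as a result of I.~M\"uller \cite{Mue13}, remarking only that she ``combined the existence of a stationary independence relation with Kat\u{e}tov's construction''. Your sketch is precisely the Kat\u{e}tov-functor approach alluded to there, so it matches the method the paper attributes to M\"uller, and there is nothing further in the paper to compare against.
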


We will see that free amalgams provide a stationary independence relation.

\begin{theorem}\label{free=ind} Let $M_0$ be a \Frai limit. We assume that
the  free amalgam of finitely generated substructures  of $M_0$ exists in $M_0$ and 
define for finite subsets $A, B, C$ of $M_0$:
\[ A \ind_B C\] if and only if  
\[\langle A B C \rangle = \langle A B \rangle \otimes_{\langle B \rangle} \langle B C \rangle .\]
Then $\ind$ is a stationary independence relation in $\C$.
\end{theorem}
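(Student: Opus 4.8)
The plan is to verify the six axioms of a stationary independence relation one by one, using the universal property of the free amalgam and the ultrahomogeneity of $M_0$ (Fact \ref{fact2.1}). Throughout I would work with the substructures $\langle AB\rangle$, $\langle BC\rangle$, $\langle B\rangle$ generated by the finite sets, since by uniform local finiteness (and $\aleph_0$-categoricity, Fact \ref{fact2.3}) these are finite and the quantifier-free type of a tuple determines its full type. A convenient first observation is that $A\ind_B C$ holds \iff there is \emph{some} embedding of $\langle ABC\rangle$ over $\langle B\rangle$ realizing the free amalgam; equivalently $\langle ABC\rangle\models\Sigma_{\J}(\langle AB\rangle,\langle B\rangle,\langle BC\rangle)$ in the notation of Definition \ref{D: sigma} and Lemma \ref{sigma}(1). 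This reformulation makes \textbf{Inv} immediate, because whether the generated structure satisfies this set of atomic/negated-atomic conditions depends only on the isomorphism type of $\langle ABC\rangle$ over the distinguished tuples, hence only on $\tp(A,B,C)$.

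For \textbf{Sym} I would use that $A\otimes_B C$ and $C\otimes_B A$ are the same object (the definition of the free amalgam is symmetric in $A$ and $C$), so $\langle ABC\rangle=\langle AB\rangle\otimes_{\langle B\rangle}\langle BC\rangle$ is literally the same condition as $\langle ABC\rangle=\langle BC\rangle\otimes_{\langle B\rangle}\langle AB\rangle$. For \textbf{Ex}: given $A,B,C$, form the abstract free amalgam $\langle AB\rangle\otimes_{\langle B\rangle}\langle BC'\rangle$ where $C'$ is a fresh copy of $\langle BC\rangle$ glued along $\langle B\rangle$; this lies in the age of $M_0$ by hypothesis (it exists in $M_0$), so by $\K$-saturation it embeds into $M_0$ over $\langle BC\rangle$, and the image $A'$ of $A$ satisfies $\tp(A'/B)=\tp(A/B)$ and $A'\ind_B C$ by construction. \textbf{Trans} and \textbf{Stat} are where the work concentrates. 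For \textbf{Stat}, suppose $\tp(A/B)=\tp(A'/B)$ with $A\ind_B C$ and $A'\ind_B C$; both $\langle ABC\rangle$ and $\langle A'BC\rangle$ are then free amalgams of (isomorphic copies of) $\langle AB\rangle$ with $\langle BC\rangle$ over $\langle B\rangle$, and since the free amalgam is unique up to isomorphism (Definition \ref{freeamal} and the remark after it), there is an isomorphism $\langle ABC\rangle\to\langle A'BC\rangle$ fixing $\langle BC\rangle$ and sending $A$ to $A'$; by ultrahomogeneity this extends to an automorphism of $M_0$, giving $\tp(A/BC)=\tp(A'/BC)$.

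For \textbf{Trans} I would assume $A\ind_B C$ and $A\ind_{BC}D$ and show $\langle ABCD\rangle=\langle ABC\rangle\otimes_{\langle BC\rangle}\langle BCD\rangle$ combined with $\langle ABC\rangle=\langle AB\rangle\otimes_{\langle B\rangle}\langle BC\rangle$ yields $\langle ABCD\rangle=\langle ABD\rangle\otimes_{\langle B\rangle}\langle BCD\rangle$ (note $\langle BCD\rangle$ already contains $\langle BC\rangle$). This is a diagram-chase with the universal property: given homomorphisms $f\colon\langle ABD\rangle\to E$ and $g\colon\langle BCD\rangle\to E$ agreeing on $\langle B\rangle$, I first amalgamate $f\!\restriction\!\langle AB\rangle$ against $g\!\restriction\!\langle BC\rangle$ over $\langle B\rangle$ to get a homomorphism on $\langle ABC\rangle$, then check it agrees with $g$ on $\langle BC\rangle$ and with $f$ on the overlap with $\langle ABD\rangle$, and finally amalgamate against $g\colon\langle BCD\rangle\to E$ over $\langle BC\rangle$ to extend to $\langle ABCD\rangle$; uniqueness of the amalgam ensures the pieces glue. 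Finally, rather than proving \textbf{Mon} directly (which would be a similar but fiddlier diagram chase), I would invoke Lemma \ref{mon}: since \textbf{Inv}, \textbf{Trans}, \textbf{Sym}, \textbf{Ex}, \textbf{Stat} have been established, \textbf{Mon} follows automatically. The theorem as stated asserts the relation is a stationary independence relation in $\C$; this requires the transfer to the monster model of $Th(M_0)$, which is legitimate because under uniform local finiteness the age of $\C$ is still closed under free amalgamation by Lemma \ref{sigma}(3), so the same six-axiom verification goes through verbatim with $\C$ in place of $M_0$. I expect the diagram chase for \textbf{Trans} to be the main obstacle, mostly in bookkeeping which homomorphism is being extended along which common substructure and checking the compatibility conditions at each stage.
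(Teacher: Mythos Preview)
Your overall architecture matches the paper's proof closely: verify \textbf{Inv}, \textbf{Sym}, \textbf{Ex}, \textbf{Stat}, \textbf{Trans} directly from the universal property and ultrahomogeneity, then invoke Lemma~\ref{mon} for \textbf{Mon}. Two points need correction.

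First, in your \textbf{Trans} argument you unwind $A\ind_B CD$ as $\langle ABCD\rangle=\langle ABD\rangle\otimes_{\langle B\rangle}\langle BCD\rangle$ and accordingly start from a homomorphism $f\colon\langle ABD\rangle\to E$. This is not what the definition says: $A\ind_B CD$ means $\langle A\,B\,(CD)\rangle=\langle A\,B\rangle\otimes_{\langle B\rangle}\langle B\,(CD)\rangle$, so the left factor is $\langle AB\rangle$, not $\langle ABD\rangle$. With $\langle ABD\rangle$ the diagram chase breaks down, because after you restrict $f$ to $\langle AB\rangle$, amalgamate, and then extend over $\langle BC\rangle$ to $\langle ABCD\rangle$, nothing guarantees the resulting map agrees with the original $f$ on the rest of $\langle ABD\rangle$. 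Replace $\langle ABD\rangle$ by $\langle AB\rangle$ throughout and your argument becomes exactly the paper's: given $f_0\colon\langle AB\rangle\to G$ and $f_1\colon\langle BCD\rangle\to G$ agreeing on $B$, use $A\ind_B C$ to extend $f_0$ and $f_1\!\restriction\!\langle BC\rangle$ to $g$ on $\langle ABC\rangle$, then use $A\ind_{BC}D$ to extend $g$ and $f_1$ to $\langle ABCD\rangle$.

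Second, you invoke uniform local finiteness and $\aleph_0$-categoricity (Fact~\ref{fact2.3}) to say quantifier-free type determines type. The theorem does not assume uniform local finiteness, and you do not need it: in any Fra\"{\i}ss\'e limit, ultrahomogeneity (Fact~\ref{fact2.1}) already gives that the quantifier-free type of a tuple determines its full type in $M_0$. The paper's proof works at this level of generality. Your final paragraph about transferring to $\C$ via Lemma~\ref{sigma}(3) is reasonable, but note that the ``$\C$'' in the theorem statement is a slip for $M_0$; the passage to the monster model under the extra hypothesis of uniform local finiteness is handled separately in Corollary~\ref{aut2}.
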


\begin{proof}
By definition $A \ind_B C$ if and only if $\langle A B\rangle \ind_{\langle B \rangle} \langle B C \rangle$.
Hence we assume w.l.o.g. that $A, B, C $ are finitely generated substructures.
\begin{description}
\item[Inv] It is clear since the free amalgam is uniquely determined by its isomorphism type.

\item[Sym] It follows directly from the definition.

\item[Ex] Since the class of finitely generated substuctures of $M_0$ is closed under free amalgamation and $M_0$ is age($M_0$) - saturated we get {\bf Ex}.

\item[Stat] It is a consequence of the uniqueness of the free amalgam and ultrahomogeneity. 

\item[Trans] By assumption 
$\langle ABCD \rangle = (\langle AB \rangle \otimes_B \langle BC \rangle) \otimes_{\langle BC \rangle} \langle BCD \rangle $. We show  that this structure is the free amalgam of $\langle A B \rangle$ and $\langle B C D \rangle$ over $B$. Let $G$ be a structure in age($M_0)$.
Let $f_0$ be a homomorphism of $\langle AB \rangle $ into $G $ and 
$f_1$ be a homomorphism of $ \langle BCD \rangle$ into $G$ such that $f_0 (b) = f_1 (b)$ for $b \in B$.
By $A \ind_B C$ there is a homomorphism $g$ of $\langle ABC \rangle$ into $G$, that extends $f_0$ and $f_1$ resticed to $\langle B C \rangle$. Since $g(e) = f_1(e)$ for $e \in \langle BC \rangle$, there is a homomorphism $h$ of $\langle ABCD\rangle $ into $G$, that extends $g$, $f_1$, and therefore $f_0$. We get $A \ind_B CD$, as desired.

\item[Mon] It follows by Lemma \ref{mon}.

\end{description}
\end{proof}

Using the Theorem \label{mueller} of I. M\"uller we obtain:

\begin{cor}\label{aut1}
Let $M_0$ be a \Frai limit. Assume that the   free amalgam of finitely generated substructures of  $M_0$ exists. Then $Aut(M_0)$ is universal for all substructures $N \subseteq M_0$.
\end{cor}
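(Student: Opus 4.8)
The plan is to derive this directly by combining Theorem \ref{free=ind} with Theorem \ref{mueller}. First I would observe that under the hypothesis — namely, that the free amalgam of finitely generated substructures of $M_0$ exists in $M_0$ — Theorem \ref{free=ind} provides a concrete relation $\ind$ on finite subsets of $M_0$, defined by $A \ind_B C$ \iff $\langle ABC\rangle = \langle AB\rangle \otimes_{\langle B\rangle} \langle BC\rangle$, and asserts that this $\ind$ is a stationary independence relation in $M_0$. (Strictly speaking Theorem \ref{free=ind} phrases the conclusion in terms of $\C$, but the argument only uses ultrahomogeneity of $M_0$ and closure of $\operatorname{age}(M_0)$ under free amalgamation, so it applies verbatim to $M_0$ itself; I would note this so the hypotheses of Theorem \ref{mueller} are literally met.)

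Next I would simply feed this into Theorem \ref{mueller}: $M_0$ is a \Frai limit by assumption, and we have just exhibited a stationary independence relation in $M_0$, so Theorem \ref{mueller} yields that $\operatorname{Aut}(M_0)$ is universal for $\operatorname{Aut}(N)$ for every substructure $N \subseteq M_0$. This is exactly the assertion of the corollary, so the proof closes.

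There is essentially no obstacle here — the corollary is a formal consequence of the two theorems it cites, both already available in the excerpt. The only point requiring a word of care is the mismatch between "in $\C$" in the statement of Theorem \ref{free=ind} and "in $M_0$" as needed for Theorem \ref{mueller}; I would handle this by remarking that the proof of Theorem \ref{free=ind} establishes the six properties \textbf{Inv}, \textbf{Sym}, \textbf{Ex}, \textbf{Stat}, \textbf{Trans}, \textbf{Mon} for $\ind$ restricted to finite subsets of $M_0$, using only $\operatorname{age}(M_0)$-saturation and ultrahomogeneity of $M_0$, hence $\ind\!\upharpoonright M_0$ is itself a stationary independence relation in $M_0$ in the sense of Definition \ref{statind}.

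\begin{proof}
By Theorem \ref{free=ind}, the relation $A \ind_B C \iff \langle ABC\rangle = \langle AB\rangle \otimes_{\langle B\rangle}\langle BC\rangle$ satisfies all the properties of Definition \ref{statind}; since the proof of that theorem uses only that $\operatorname{age}(M_0)$ is closed under free amalgamation and that $M_0$ is $\operatorname{age}(M_0)$-saturated and ultrahomogeneous, the restriction of $\ind$ to finite subsets of $M_0$ is a stationary independence relation in $M_0$. As $M_0$ is a \Frai limit, Theorem \ref{mueller} applies and gives that $\operatorname{Aut}(M_0)$ is universal for all $\operatorname{Aut}(N)$ with $N$ a substructure of $M_0$.
\end{proof}
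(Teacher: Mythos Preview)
Your proposal is correct and matches the paper's approach exactly: the paper treats the corollary as an immediate consequence of Theorem~\ref{free=ind} and Theorem~\ref{mueller}, stating only ``Using the Theorem of I.~M\"uller we obtain'' before the corollary with no further argument. Your remark about the ``in $\C$'' versus ``in $M_0$'' discrepancy in the statement of Theorem~\ref{free=ind} is well observed---this is evidently a typo in the paper, since the theorem is formulated for finite subsets of $M_0$ and its proof uses only properties of $M_0$.
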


\begin{definition}
A relation $A \ind_B C$ for small subsets of a monster model $\C$ is a stationary independence relation in $\C$, if it fulfils {\bf Inv, Mon, Trans, Sym, Ex, Stat} and 
\begin{description}
\item[Fin] {\em Finite Character} $A \ind_B C$ \iff $\bar{a} \ind_B \bar{c}$ for all finite tuple $\bar{a}$ in $A$ and $\bar{c}$ in $C$.
\end{description}
\end{definition}

A stationary independence relation in $\C$ has all properties of non-forking in a stable theory except {\em Local Character}. Furthermore {\em Boundedness} is replaced by the stronger property {\bf Stat}. In the next chapter there are examples with the tree property  of the second kind.

\begin{cor}\label{aut2}
Let $L$ be finite and $\K$ be countable class of finitely generated L-structures that are uniformly locally finite. Assume a $\K$-saturated model $M_0$ exists and $\C$ is a monster model of $Th(M_0)$. 
If $\K$ is closed under free amalgamation, then the free amalgam for small substructures of $\C$ exists and defines a stationary independence relation  in $\C$. 
\end{cor}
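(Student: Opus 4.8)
The plan is to deduce this from Lemma~\ref{sigma} together with (a version of) Theorem~\ref{free=ind} carried out inside $\C$ rather than inside $M_0$, isolating the one genuinely new point, the verification of \textbf{Fin}.

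First I would fix the ambient class. By Fact~\ref{fact2.3}, $Th(M_0)$ is $\aleph_0$-categorical and admits quantifier elimination; let $\J$ be the ($\forall$-elementary) class of all substructures of models of $Th(M_0)$. Since $L$ is finite and $\K$ is uniformly locally finite, every finitely generated member of $\J$ is finite and these form precisely $\K$; moreover every small member of $\J$ embeds into $\C$, and any isomorphism between small substructures of $\C$ is partial elementary (by quantifier elimination). By Lemma~\ref{sigma}(3) the class $\J$ is closed under free amalgamation, so for any small substructures $A,B,C$ of $\C$ with $B$ a common substructure the free amalgam $A\otimes_B C$ exists in $\J$ and is again small. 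For small subsets $A,B,C$ of $\C$ I then set, exactly as in Theorem~\ref{free=ind},
\[ A\ind_B C \quad\Longleftrightarrow\quad \langle ABC\rangle = \langle AB\rangle\otimes_{\langle B\rangle}\langle BC\rangle ,\]
the free amalgam being computed in $\J$; this makes sense because local finiteness keeps $\langle ABC\rangle$ small, and, as in Theorem~\ref{free=ind}, one reduces at once to the case that $A,B,C$ are substructures with $B$ common.

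Next I would verify \textbf{Inv}, \textbf{Sym}, \textbf{Ex}, \textbf{Stat} and \textbf{Trans} by the arguments of Theorem~\ref{free=ind}, now using that $\C$, being a monster model of a theory with quantifier elimination, is $\K$-saturated, strongly homogeneous, and has isomorphic small substructures realizing the same type. Thus \textbf{Inv} and \textbf{Stat} come from uniqueness of the free amalgam over its two factors together with quantifier elimination; \textbf{Trans} is the associativity computation from the proof of Theorem~\ref{free=ind}, which goes through verbatim for small structures and an arbitrary small base; and \textbf{Ex} --- which is exactly the claim that ``the free amalgam for small substructures of $\C$ exists'' --- is obtained by forming $D = \langle AB\rangle\otimes_{\langle B\rangle}\langle BC\rangle \in \J$, extending the partial elementary inclusion $\langle BC\rangle \hookrightarrow \C$ to an embedding $D\to\C$ by saturation, and letting $A'$ be the image of the copy of $A$. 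Then \textbf{Mon} follows from Lemma~\ref{mon}, whose proof uses only \textbf{Inv}, \textbf{Ex}, \textbf{Trans}, \textbf{Stat} and is valid for small subsets.

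The remaining property, \textbf{Fin}, is where I expect the main (if modest) obstacle to be. The direction $A\ind_B C \Rightarrow \bar a\ind_B\bar c$ for finite $\bar a\subseteq A$, $\bar c\subseteq C$ is immediate from \textbf{Sym} and \textbf{Mon}. For the converse, assume $\bar a\ind_B\bar c$ for all such $\bar a,\bar c$, with $A,B,C$ substructures and $B$ common. First, each finite free amalgam $\langle\bar a B\bar c\rangle = \langle\bar a B\rangle\otimes_{\langle B\rangle}\langle B\bar c\rangle$ is a strong amalgam, and localizing via local finiteness this forces $A\cap C = B$ in $\C$. Writing $\langle ABC\rangle = \bigcup_{\bar a,\bar c}\langle\bar a\bar c\rangle$ as a directed union over finite $\bar a\subseteq A$, $\bar c\subseteq C$, and given homomorphisms $f\colon A\to E$ and $g\colon C\to E$ into some $E\in\J$ with $f|_B = g|_B$, the restrictions $f|_{\langle\bar a B\rangle}$ and $g|_{\langle B\bar c\rangle}$ agree on $B$, so the free amalgam property of $\langle\bar a B\bar c\rangle$ yields a homomorphism on $\langle\bar a\bar c\rangle$ extending $f$ and $g$ there; since a homomorphism is determined by its values on a generating set, these homomorphisms agree on overlaps and glue to a homomorphism $h\colon\langle ABC\rangle\to E$ extending $f$ and $g$. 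Hence $\langle ABC\rangle = \langle AB\rangle\otimes_{\langle B\rangle}\langle BC\rangle$, that is, $A\ind_B C$. With all seven properties established, $\ind$ is a stationary independence relation in $\C$.
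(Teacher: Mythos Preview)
Your overall architecture matches the paper's: set $\J$ to be the substructures of models of $Th(M_0)$, invoke Lemma~\ref{sigma}(3) to get free amalgamation in $\J$, rerun the proof of Theorem~\ref{free=ind} inside $\C$ using quantifier elimination and saturation, get \textbf{Mon} from Lemma~\ref{mon}, and obtain the forward direction of \textbf{Fin} from \textbf{Mon}. The one genuine difference is the backward direction of \textbf{Fin}. The paper argues via Lemma~\ref{sigma}(1) and compactness: it reduces to the consistency of $Th(M_0)\cup\Sigma_{\J}(A,B,C)$, and checks finite subsets of $\Sigma_{\J}(A,B,C)$ using the hypothesis that each $\langle\bar a B\bar c\rangle$ is already a free amalgam, mimicking the proof of Lemma~\ref{sigma}(2). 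You instead verify the universal property of $\langle ABC\rangle$ directly, by writing it as the directed union $\bigcup_{\bar a,\bar c}\langle\bar a B\bar c\rangle$ and gluing the extending homomorphisms (which are unique, since they are determined on the generating set $\bar a B\bar c$). Your route is more elementary and avoids both compactness and the auxiliary theory $\Sigma_{\J}$; the paper's route has the virtue of reusing the $\Sigma_{\J}$ machinery already set up. Both are correct.

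One small slip: in your \textbf{Fin} paragraph you write $\langle ABC\rangle=\bigcup_{\bar a,\bar c}\langle\bar a\bar c\rangle$ and speak of a homomorphism ``on $\langle\bar a\bar c\rangle$'', but the pieces you actually need (and use) are $\langle\bar a B\bar c\rangle$, since $B$ may be infinite and must be carried along to apply the free-amalgam hypothesis over $B$. With that corrected, the gluing argument goes through as you indicate.
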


\begin{proof} Let $\J$    be the class of all substructures of models of $Th(M_0)$. Then $\K$ is the class of the finitely generated substructures of models of $M_0$. These structures are finite.
By Lemma \ref{sigma} the  free amalgam of substructures of $\C$ exists. All properties  exept {\bf Fin} are shown in the same way as in Theorem \ref{free=ind}. {\bf Mon} implies the assertion from the left to the right of {\bf Fin}. By Lemma \ref{sigma} (1) the other direction follows from the consistency of $Th(M_0) \cup \Sigma_{\J}(A,B,C)$. We use compactness and the consistency of all $Th(M_0) \cup \Sigma_{\J}(\langle \bar{a}B \rangle, B , \langle \bar{c}B \rangle)$ for all finite $\bar{a}$ and $\bar{c}$, similary as in the proof of Lemma \ref{sigma} (2).
\end{proof}

We will apply the results of this chapter to the \Frai limits of graded Lie algebras over finite  fields  and of 
c-nilpotent groups of exponent p ($c < p$) wit extra predicates for a central Laszard series. 

\section{Graded Lie algebras over fields}
We consider graded Lie algebras $A$ over a fixed field $K$ in the language of  vector spaces over $K$ extended by a function symbol $[x,y]$ for the Lie multiplication and unary predicates $U_i$ with $1 \le i \le \omega$, such that 
\[ A = \bigoplus_{1 \le i \le \omega} A_i \] 
as a vector space, where $A_i$ is the interpretation  of $U_i$ and $[a,b] \in A_{i+j}$, if $a \in A_i$ and $b \in A_j$. We say that the elements of $A_i \setminus \{ 0 \}$ have degree i. A graded Lie algebra $A$ is c-nilpotent, if $A_i = \langle 0 \rangle$ for $c < i$. In this case we use $U_i$ only for $i \le c$.

\begin{theorem}\label{Lieam1}
The class of  c-nilpotent graded Lie algebras over a field $K$ is closed under  free amalgamation. 
\end{theorem}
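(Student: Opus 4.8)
The plan is to build the free amalgam $D = A\otimes_B C$ by hand, in two stages: first amalgamate $A$ and $C$ over $B$ in the category of \emph{all} graded Lie algebras over $K$ and check that the result is a strong amalgam, then pass to the $c$-nilpotent quotient. By Definition~\ref{freeamal} we may assume that $B$ is literally a common graded subalgebra of $A$ and of $C$; since graded subalgebras of $c$-nilpotent graded Lie algebras stay in the class, one could alternatively invoke Lemma~\ref{sigma}(1), but the real content is the construction below.

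\emph{Stage 1: the amalgamated free product.} Let $P = A\ast_B C$ be the pushout of $A\hookleftarrow B\hookrightarrow C$ in the category of graded Lie algebras over $K$ (in which pushouts exist); it is generated by the images $\iota_A(A)$ and $\iota_C(C)$, and $\iota \colon B\to P$ satisfies $\iota = \rest{\iota_A}{B} = \rest{\iota_C}{B}$. I claim $\iota_A$ and $\iota_C$ are injective and $\iota_A(A)\cap\iota_C(C) = \iota(B)$. To prove this I pass to universal enveloping algebras. The functor $\U$ is a left adjoint, hence preserves pushouts, so $\U(P)\cong\U(A)\ast_{\U(B)}\U(C)$, the coproduct of graded unital associative $K$-algebras over $\U(B)$. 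Here $\U(B)\to\U(A)$ and $\U(B)\to\U(C)$ are injective by the Poincar\'e--Birkhoff--Witt theorem (PBW); moreover, choosing a homogeneous complement of $B$ in $A$ (possible since $K$ is a field) and an ordered homogeneous basis of $A$ in which that complement precedes $B$, PBW shows $\U(A)$ is a \emph{free} right $\U(B)$-module on a homogeneous basis containing $1$, and likewise for $\U(C)$. The classical normal form for a coproduct of rings whose factors are free over the common subring then gives: $\U(A)$ and $\U(C)$ embed into $\U(P)$ and $\U(A)\cap\U(C) = \U(B)$ there. Now $A\hookrightarrow\U(A)\hookrightarrow\U(P)$ is $\iota_A$ followed by the PBW embedding $P\hookrightarrow\U(P)$, so $\iota_A$ (and symmetrically $\iota_C$) is injective; and if $y\in\iota_A(A)\cap\iota_C(C)$ then inside $\U(P)$ we have $y\in\U(A)\cap\U(C) = \U(B)$, while $y$ also lies in the span of the degree-one PBW monomials of $P$. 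Fixing an ordered homogeneous basis of $P$ extending a basis of $\iota(B)$ and using uniqueness of PBW expansions, this forces $y\in\iota(B)$. Hence $P$ is a strong amalgam of $A$ and $C$ over $B$.

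\emph{Stage 2: cutting off at degree $c$.} The graded subspace $I = \bigoplus_{i>c}P_i$ is an ideal, because the bracket adds degrees, so $D := P/I$ is a $c$-nilpotent graded Lie algebra over $K$, generated by the images of $A$ and $C$. Since $A$ and $C$ are concentrated in degrees $\le c$, we have $\iota_A(A)\cap I = \iota_C(C)\cap I = 0$, so $A$ and $C$ still embed into $D$; and if $a\in A$, $c'\in C$ have the same image in $D$, then $\iota_A(a)-\iota_C(c')\in I$ lies in degrees $\le c$, hence vanishes in $P$, so by Stage~1 it comes from $B$. Thus $D$ is again a strong amalgam. For the universal property: given graded-Lie homomorphisms $f\colon A\to E$ and $g\colon C\to E$ into a $c$-nilpotent graded $E$ with $\rest{f}{B} = \rest{g}{B}$, the pushout property of $P$ yields $h_0\colon P\to E$ extending $f$ and $g$; since $h_0$ is graded and $E$ vanishes above degree $c$, $h_0(I)=0$, so $h_0$ factors through a homomorphism $D\to E$ extending $f$ and $g$, and this is unique because $D$ is generated by the images of $A$ and $C$. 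Hence $D = A\otimes_B C$ lies in the class, so the class is closed under free amalgamation.

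\emph{The main obstacle} is Stage~1 — that the amalgam of Lie algebras over a field is a strong amalgam — which is exactly the point mishandled in \cite{Bau04}. The device that makes it correct is the reduction through universal enveloping algebras, which converts the question into the normal-form theorem for ring coproducts; the hypothesis that $K$ is a field enters twice there, both to supply homogeneous complements (hence freeness of $\U(A),\U(C)$ over $\U(B)$) and to apply PBW to the possibly infinite-dimensional $P$.
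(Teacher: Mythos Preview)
Your argument is correct and follows a genuinely different route from the paper's. The paper stays inside the $c$-nilpotent class throughout: it presents the candidate amalgam as $F(X)/J$ for the free $c$-nilpotent graded Lie algebra on $X=X_AX_BX_C$, and the real work (Claim~1) is to show $J\cap F(X_AX_B)=J_A$ and $J\cap F(X_BX_C)=J_C$. That is done by constructing a concrete strong amalgam inductively, reducing to the ``Major Case'' $X_A=\{a\}$, $X_C=\{c\}$; there the underlying space is $B$ together with the free Lie algebra on $a,c$, the bracket $[y,b]$ for a Hall monomial $y$ and $b\in X_B$ is defined recursively, and the Jacobi identity is checked by a bare-hands computation. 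You instead build the pushout $P=A\ast_B C$ in the category of all graded Lie algebras, show it is a strong amalgam by passing to enveloping algebras (using that $\U$ preserves pushouts, PBW, and the normal form for an associative coproduct $\U(A)\ast_{\U(B)}\U(C)$ whose factors are free over the base), and then simply truncate above degree~$c$. The paper's proof is elementary and entirely self-contained, at the price of an explicit Jacobi verification; yours is short and conceptual but imports nontrivial machinery---the one step that deserves a precise citation is the associative coproduct normal form (the Cohn--Bergman theory), since the textbook statement is usually formulated with $R$-bimodule complements, which $\U(B)\subseteq\U(A)$ need not admit, so you are really using the version for one-sided free factors. A pleasant by-product of your route is that Stage~1 already yields free amalgamation for \emph{all} graded Lie algebras over $K$, which the paper only recovers afterwards via compactness.
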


\begin{proof} Let $\J$ be the class of c-nilpotent graded Lie algebras over $K$. It is $\forall$-elementary and subalgebras of finitely generated algebras in $\J$ are again finitely generated. By Lemma \ref{sigma}
it is sufficient to give a construction of  a free amalgam of $A$ and $C$ over $B$, where $A$, $B$, $C$ are finitely generated c-nilpotent graded Lie algebras over $K$ ,and $B$ is  a common subalgebra.  

We choose a vector space basis 
\[ X_B = \{b_{i,j} : 1 \le i \le c , j < \beta_i \} \] 
of $B$ with $U_i(b_{i,j})$. Then we extend $X_B$ by
\[ X_A = \{a_{i,j} : 1 \le i \le c , j < \alpha_i \}\] and 
\[ X_C = \{c_{i,j} : 1 \le i \le c , j < \gamma_i \}\]  
with $U_i(a_{i,j})$, $U_i(c_{i,j})$ and $X_A \cap X_C = \emptyset$, such that $X_A X_B$ is a vector space basis for $A$ and $X_B X_C$ is a vector space basis of $C$. Let $X$ be $X_A X_B X_C$. We use the graded set $X$ as a set of free generators of   the free graded Lie algebra $F(X)$. The elements of $X$ are in $U_i$ according to the definition above. Let $J_A$ be the ideal in $F(X_B X_A)$ generated by all equations $[x,y] = z$ in $A$ where $x,y \in X_B X_A$ and $z$ is a linear combination of elements in $X_A X_B \cap U_{i+j}$ , if $U_i(x)$ and $U_j(y)$.
Then $F(X_A X_B)/J_A$ is isomorphic to $A$. Analogously we define $J_C$ in $F(X_B X_C)$, such that 
$F(X_B X_C)/J_C$ is isomorphic to $C$. Let $J$ be the ideal in $F(X)$ generated by $J_A$ and $J_C$.

\begin{description}
\item[Claim 1] $J \cap F(X_A X_B) = J_A$ and $J \cap F(X_B X_C) = J_C$
\item[Claim 2] $F(X)/J$ is the free amalgam of $A$ and $C$ over $B$. 
\end{description}

First we show that Claim 1 implies Claim 2. By Claim 1 $F(X)/J$ contains isomorphic images $A',B',C'$ of  $A, B, C$ respectively, such that $A' \cap C' = B'$ and $\langle A',C' \rangle = F(X)/J$.  Let $f_A$ and $f_C$ be any pair of homomorphisms of $A'$ and $C'$ respectively into a c-nilpotent graded Lie algebra $G$ over $K$, such that $f_A(b) = f_C(b)$ for $b \in B'$. If we map the elements of $X$ onto their $f_A$- respectively $f_C$-images in $G$, then we get an homomorphism $f$ of $F(X)$ into $G$. The kernel of $f$ contains $J$ by the definition of $J_A$ and $J_C$. Hence $f$ induces the desired homomorphism of $F(X)/J$ into $G$.

To prove Claim 1 it is sufficient
to construct a strong amalgam directely step by step. We use again
$X_A$, $X_B$, and $X_C$, where the $\alpha_i$ , $\beta_i$, and $\gamma_i$ are finite. Now the underlying vector space of the amalgam $D$ is a vectorspace where $X = X_A X_B X_C$ is part of a basis of this space. For $x \in X$ we have $U_i(x)$ if and only if  $x = a_{i,j}$ or $x = b_{i,j}$ or $x = c_{i,j}$ for some j.  The only problem is the definition of the Lie multiplication for the elements of a vector space basis of D. Since multiplication with elements from $U_c$ gives $0$, we can put all elemtens of $U_c(X)$ into $X_B$. Therefore we assume w.l.o.g. that

\[\alpha_c = \gamma_c = 0.\]

First we  solve the following essential  case:

\begin{description}
\item[Major  Case] $X_A = \{a\}$ and $X_C = \{c\}$ with $U_i(a)$ , $U_j(c)$, and $i,j < c$.
\end{description}

Let $Y$ be a vector space basis of the free graded Lie algebra over $K$ freely generated by $a$ and $c$.
We assume that $Y$ is a set of Hall basic monomials.  $X_B Y$ will be a vectorspace basis of the amalgam $D$. The degree  of the elements of $X$ is given in $A$ and $C$. The degree of the other elements of $Y$ is canonically given using the degrees of $a$ and $c$. Finally we define the Lie multiplication in such a way that $D$ becomes a Lie algebra with the given graduation. We have only to consider  $[y,b] = - [b,y]$ for $y \in Y$ and $b \in X_B$. The problem is to ensure the Jacobin identity for all triple $[[y_1,y_2],b]$ and $[[y,b],d]$ where $y_1, y_2, y \in Y$ and $b,d \in X_B$. Inside $A$, $C$, and $\langle Y \rangle$ the Lie multiplication is given. Note that $[a,b] \in B$  and $[c,b] \in B$ for all $b\in X_B$.
This is the starting point of an inductiv definition on the degree of $y \in Y$ of $[y,b]$ for all $b \in X_B$.
If $y = [y_1,y_2]$, then we define 
\[  [[y_1,y_2],b] = [[y_1,b],y_2] + [y_1,[[y_2,b]] \]
accordingly to the Jacobi-identity. By induction $[y_1,b] = b_1 \in  B$ and $[y_2,b] = b_2 \in B$ are defined and also $[b_1, y_2] \in B$ and $[y_1, b_2] \in B$.

We have to check the Jacobi identity for the case $y \in Y$ and $b,d \in X_B$. Again we use induction on the comlexity of $y$. The begin with $y = a$ or $y = c$ is true in $A$ respectively in $C$ and therefore in $D$.
Now we assume that $y_1$ with any two elements of $X_B$  and $y_2$ with any two elements of $X_B$ satisfy the Jacobi identity. We have to show:

\begin{description}
\item[(l=r)] \[   [[[ y_1,y_2],b],d] = [[[y_1,y_2],d],b] + [[y_1,y_2][b,d]] \]
\end{description}

Using the inductiv definition of $[y,e]$ for $y \in Y$ and $e \in X_B$ the right side can be written as:

\begin{description}
\item[(r)] \[ [[[y_1,d],y_2],b] + [[y_1,[y_2,d]],b] + [[y_1,[b,d]],y_2] + [y_1,[y_2,[b,d]]] \]
\end{description}

Now we apply the definitions and the induction to the left side and obtain the  following identities. We use that $[y_1,b], [y_2,b], [y_1,d], [y_2,d] \in B$

\begin{description}
\item[($l_1)$]  \[ [[[y_1,b],y_2],d] + [[y_1,[y_2,b],d] = \]
\item[($l_2$)]  \[ [[[y_1,b],d],y_2] +  [[y_1,b], [y_2,d]] + [[y_1,d],[y_2,b]] + [y_1,[[y_2,b],d]] = \]
\item[($l_3$)]  \[ [[[y_1,d],b],y_2] +  [[[y_1,[b,d]],y_2] + [[y_1,[y_2,d]],b] + [y_1,[b,[y_2,d]]] + \]
                       \[ [[[y_1,d],y_2],b] +  [y_2,[[y_1,d],b]]] + [[y_1,[y_2,d]],b] + [y_1,[y_2,[b,d]]] .\]
\end{description}

After cancelation in ($l_3$) we see that it equal to (r) as desired. The proof for the Major Case is finished. In fact we have constructed a free amalgam. \bigskip

\begin{description}
\item[Reduction to $X_C = \{ c \}$] If the strong amalgam exist for all $A$, $B$,and $C = \langle X_B c\rangle$, then it exists for all  $A$, $B$, and $C$.
\end{description}

We assume that the assertion is true for  $X_C = \{c\}$. We show by induction on $c-i$ that the strong amalgam exists for all $X_C$ with $\gamma_j =0$ for $j < i$. The case $c = i$ is clear, since we can assume that $\alpha_c = \gamma_c = 0$, as discussed above. 

We fix $i$ and assume that the assertion is true for $i+1$. By a second induction on the size of $\gamma_i$ we reduce the problem to the case $\gamma_i = 1$. For induction step of this induction let $c = c_{\gamma_i - 1}$ and $C^-$ be the subalgebra of $C$ generated by $X_B X_C \setminus \{c \}$. By the second induction there is a strong amalgam $D^-$ of $A$ and $C^-$ over $B$. Now we have to amalgamate 
$D^-$ and $C$ over $C^-$. This is the case $X_C = \{ c \}$ and we can apply the assumption in the claim.

\bigskip

\begin{description}
\item[Reduction to the Major Case] The Major Case implies the Reduction to $X_C = \{c\}$.
\end{description}

We can apply the same arguments to all situations  $A$, $B$ and $X_C = \{ c \}$ and come to the Major Case.
The assertion of the theorem follows. Note that all amalgams condtructed are free.
\end{proof}

By compactness and Lemma \ref{sigma}(1) we obtain:

\begin{cor} The class of all graded Lie algebras over a given field $K$ is closed under free amalgamation.
\end{cor}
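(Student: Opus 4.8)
The plan is to pass from the finitely‑generated case, already handled in Theorem~\ref{Lieam1}, to arbitrary graded Lie algebras over $K$ by a compactness argument, exactly in the pattern of Lemma~\ref{sigma}(1)--(2). First I would fix $c$‑nilpotent... wait — no: the statement is about \emph{all} graded Lie algebras over $K$, so I must drop the nilpotence bound and work with the full language having predicates $U_i$ for all $i<\omega$. Observe that the class $\J$ of all graded Lie algebras over $K$ is $\forall$‑elementary (being a Lie algebra, being a $K$‑vector space with the $U_i$ giving a direct sum decomposition, and the grading condition $[U_i,U_j]\subseteq U_{i+j}$ are all expressible by universal axioms), and that a subalgebra generated by finitely many homogeneous elements is finitely generated. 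So by Lemma~\ref{sigma}(1) it suffices to show that $\Sigma_{\J}(A,B,C)$ has a model in $\J$ for arbitrary $A,B,C\in\J$ with $B$ a common subalgebra, and by the compactness argument of Lemma~\ref{sigma}(2) it suffices to show each finite subset $\Sigma_0\subseteq\Sigma_{\J}(A,B,C)$ is realized.

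The key step is then: given $\Sigma_0$, form the finitely generated subalgebras $A^0\le A$, $C^0\le C$ generated by the (finitely many) elements occurring in $\Sigma_0$, set $B^1=\langle(B\cap A^0),(B\cap C^0)\rangle$, and $A^1=\langle A^0,B^1\rangle$, $C^1=\langle C^0,B^1\rangle$; these are finitely generated graded Lie algebras over $K$ with $B^1$ a common subalgebra. Now I need the finitely generated case of the present corollary, i.e. that $A^1\otimes_{B^1}C^1$ exists in $\J$. Here is where Theorem~\ref{Lieam1} enters, but with a twist: $A^1,B^1,C^1$ need not be nilpotent. However, since they are finitely generated and the grading is by positive integers, each is generated in degrees $\le N$ for some $N$, and if we truncate at degree $c$ with $c$ larger than the top degree appearing in a witnessing homomorphism for any formula of $\Sigma_0$ — more carefully: for each negated atomic $\neg\phi$ in $\Sigma_0$ the witness $E$ need not be nilpotent either. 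The clean fix is to note that a \emph{free} graded Lie algebra (and quotients by homogeneous ideals) need not be nilpotent, and that the construction in the proof of Theorem~\ref{Lieam1} — realize $A^1$ as $F(X_{A^1}X_{B^1})/J_{A^1}$, $C^1$ as $F(X_{B^1}X_{C^1})/J_{C^1}$, form $F(X)/J$ — goes through verbatim without using $c$‑nilpotence \emph{provided Claim~1 holds}. So the real content to check is Claim~1 ($J\cap F(X_{A^1}X_{B^1})=J_{A^1}$ and symmetrically) in the non‑nilpotent setting.

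To get Claim~1 without nilpotence I would argue degree by degree: $F(X)$, $J$, $J_{A^1}$, $J_{C^1}$ are all homogeneous, so it is enough to prove $(J\cap F(X_{A^1}X_{B^1}))_d=(J_{A^1})_d$ for each $d$; and the degree‑$d$ part of everything only involves generators of degree $\le d$, hence only the truncations modulo degree $>d$. Truncating all of $A^1,B^1,C^1$ at degree $d$ (quotient by the homogeneous ideal of everything in degree $>d$) yields $d$‑nilpotent graded Lie algebras to which Theorem~\ref{Lieam1}'s strong‑amalgam construction applies, giving the strong‑amalgam statement in degrees $\le d$; taking the inverse limit (or just the union over the compatible truncations, since the degree‑$d$ part stabilizes) gives $D=A^1\otimes_{B^1}C^1$ in $\J$ together with Claim~1. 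I expect the main obstacle to be precisely this bookkeeping — verifying that the step‑by‑step construction of the strong amalgam in the proof of Theorem~\ref{Lieam1} (the Major Case Jacobi computation and the two reductions) is genuinely insensitive to the nilpotence bound, so that it can be run uniformly on all finite truncations and passed to the limit; once that is granted, Lemma~\ref{sigma}(1) finishes by taking the direct limit model of $\Sigma_{\J}(A,B,C)$ and Lemma~\ref{sigma}(2)'s compactness packaging gives the general statement. Finally, since everything was done over a fixed but arbitrary field $K$, no finiteness of $K$ is needed here.
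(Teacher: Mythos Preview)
Your approach is the paper's one-line proof (``by compactness and Lemma~\ref{sigma}(1)'') spelled out in detail: reduce to showing $\Sigma_{\J}(A,B,C)$ is finitely satisfiable, and realize each finite piece via the free amalgam of suitable truncations, using Theorem~\ref{Lieam1} for the nilpotent case. The degree-by-degree verification of Claim~1 that you sketch is exactly the mechanism that makes the compactness step go through.

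One slip worth flagging: you assert that $B^1=\langle(B\cap A^0),(B\cap C^0)\rangle$, and hence $A^1,C^1$, are finitely generated. This would require that subalgebras of finitely generated graded Lie algebras are again finitely generated, which is \emph{false} once nilpotence is dropped (e.g.\ the derived subalgebra of the free Lie algebra on two degree-$1$ generators is not finitely generated). So the hypothesis of Lemma~\ref{sigma}(2) genuinely fails for the class of all graded Lie algebras, and your sentence ``a subalgebra generated by finitely many homogeneous elements is finitely generated'' is true but not what is needed. Fortunately this does no damage: your degree-by-degree truncation argument never uses finite generation of $A^1,B^1,C^1$ --- Theorem~\ref{Lieam1} is stated for \emph{all} $c$-nilpotent graded Lie algebras, not just finitely generated ones --- so you can simply discard the $A^1,B^1,C^1$ detour and run the $F(X)/J$ construction and the truncation argument directly on the original $A,B,C$.
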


Note that the class of finitely generated c-nilpotent graded Lie algebras over a finite field is  countable.

\begin{cor}\label{liefrai}
Let $K$ be a finite field and $\K$ be the class of finitely generated generated c-nilpotent graded Lie algebras over $K$. Then the following is true.
\begin{enumerate}

\item A countable $\K$-saturated structure $M_0$ exists. 
\item In  $M_0$ the free amalgam of finitely generated substructures exists and  is a stationary independence relation.
\item $Aut(M_0)$ is universal for all $Aut(M)$, where $M$ is an at most countable c-nilpotent graded Lie algebra over $K$.
\end{enumerate}
\end{cor}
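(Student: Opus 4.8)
The plan is to assemble the three assertions directly from results already established in the paper, treating each as a short deduction rather than a new construction. For (1), I would first observe that $\K$, the class of finitely generated $c$-nilpotent graded Lie algebras over the finite field $K$, is countable (as remarked just before the statement) and has the hereditary property $\mathbf{HP}$, since a finitely generated subalgebra of a finitely generated $c$-nilpotent graded Lie algebra is again such an algebra. By Theorem \ref{Lieam1} the class of all $c$-nilpotent graded Lie algebras over $K$ is closed under free amalgamation, and since this class is $\forall$-elementary with subalgebras of finitely generated algebras again finitely generated, Lemma \ref{sigma}(2) transfers closure under free amalgamation to the finitely generated members, i.e.\ to $\K$. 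A free amalgam is in particular an amalgam and a joint embedding (taking $B$ trivial, or using the zero algebra as a base), so $\K$ satisfies $\mathbf{AP}$ and $\mathbf{JEP}$. By Fact \ref{fact2.2} a countable $\K$-saturated structure $M_0$ exists, which gives (1).

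For (2), the free amalgam of finitely generated substructures of $M_0$ exists \emph{inside} $M_0$: given finitely generated $A, C \subseteq M_0$ with common subalgebra $B$, form $A \otimes_B C$ in $\K$ (it exists by the closure just established), and use $\K$-saturation of $M_0$ to embed it over $B$ back into $M_0$; because the free amalgam is a strong amalgam, the images of $A$ and $C$ inside $M_0$ still intersect exactly in $B$, so $\langle ABC\rangle$ computed in $M_0$ really is the free amalgam. Thus the hypothesis of Theorem \ref{free=ind} is met, and that theorem yields that $A \ind_B C \iff \langle ABC\rangle = \langle AB\rangle \otimes_{\langle B\rangle}\langle BC\rangle$ is a stationary independence relation in $M_0$, giving (2).

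For (3), $M_0$ is a \Frai\ limit and carries a stationary independence relation by (2), so Theorem \ref{mueller} (the result of I.\ M\"uller) applies and shows $Aut(M_0)$ is universal for all $Aut(N)$ with $N$ a substructure of $M_0$. It remains to identify those substructures: by Fact \ref{fact2.1}, $M_0$ is $\K$-universal, so every countable $L$-structure whose age lies in $\K$ embeds into $M_0$. An at most countable $c$-nilpotent graded Lie algebra $M$ over $K$ has all its finitely generated subalgebras in $\K$, hence $age(M) \subseteq \K$, hence $M$ embeds into $M_0$ as a substructure; so the class $\{Aut(N) : N \subseteq M_0\}$ includes $Aut(M)$ for every such $M$, proving (3).

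I do not expect a genuine obstacle here, since all the heavy lifting is in Theorem \ref{Lieam1}, Lemma \ref{sigma}, Theorem \ref{free=ind} and Theorem \ref{mueller}. The one point needing a little care is the passage ``free amalgam exists in $\K$'' $\Rightarrow$ ``free amalgam of finitely generated substructures exists in $M_0$'': one must invoke $\K$-saturation to realize the abstract free amalgam as an actual pair of subalgebras of $M_0$ and then check that strongness of the amalgam forces the intersection inside $M_0$ to be exactly $B$ (so that no extra identifications collapse the free amalgam). Once that is noted, (2) and (3) are immediate from the cited theorems.
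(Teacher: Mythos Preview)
Your proposal is correct and follows the same route as the paper's proof, which simply cites Fact~\ref{fact2.2}, Theorem~\ref{Lieam1}, Theorem~\ref{free=ind}, and Corollary~\ref{aut1} together with the remark that every at most countable $c$-nilpotent graded Lie algebra over $K$ embeds into $M_0$. You have merely unpacked these citations in detail; the only superfluous step is the detour through Lemma~\ref{sigma}(2) to recover free amalgamation for $\K$, since the proof of Theorem~\ref{Lieam1} already establishes it directly for finitely generated algebras before passing to the full class.
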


\begin{proof} We use Fact \ref{fact2.2}, Theorem \ref{Lieam1}, Theorem \ref{free=ind}, and Corollary \ref{aut1}. All at most countable c-nilpotent graded Lie algbras over $K$ can be embedded in $M_0$.
\end{proof}

Corollary \ref{aut2} implies:

\begin{cor}
If $K$ is a finite field, $M_0$ is the \Frai limit of all finitely generated c-nilpotent graded Lie algebras and 
$\C$ is a monster model of $Th(M_0)$, then the free amalgam defines a stationary independence relation in $\C$. The theory has the tree property of the second kind.
\end{cor}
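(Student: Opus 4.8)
The plan is to combine the two parts of the claim, which are of very different character. The first assertion---that the free amalgam defines a stationary independence relation in $\C$---should follow directly from \Corollary \ref{aut2}: by \Corollary \ref{liefrai}(1) the $\K$-saturated model $M_0$ exists, and by \Theorem \ref{Lieam1} the class $\K$ of finitely generated $c$-nilpotent graded Lie algebras over the finite field $K$ is closed under free amalgamation. To invoke \Corollary \ref{aut2} I still need $\K$ to be uniformly locally finite and $L$ to be finite; both hold here, since over a \emph{finite} field $K$ a $c$-nilpotent graded Lie algebra generated by $n$ elements has dimension bounded in terms of $n$ and $c$ alone (the free $c$-nilpotent graded Lie algebra on $n$ generators is finite-dimensional, hence finite, when $K$ is finite), and the language of graded Lie algebras over a fixed finite field is finite. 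So the first sentence of the statement is essentially a citation of earlier results in the paper, applied to this concrete class.

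The second assertion---that $Th(M_0)$ has the tree property of the second kind (i.e.\ is $\mathrm{NTP}_2$-failing, $\mathrm{TP}_2$)---is the genuinely new content and is not formally deducible from anything stated in the excerpt; here the paper explicitly defers to \cite{Bau13}. The natural route, and presumably the one taken in \cite{Bau13}, is to exhibit an explicit array of formulas witnessing $\mathrm{TP}_2$. One would work inside a suitable $c$-nilpotent graded Lie algebra (it suffices to take $c=2$, the Heisenberg-type case): pick a formula $\varphi(x;\bar y)$ expressing something like ``$[x,a]=b$'' or a linear dependence condition in the degree-$2$ component relative to parameters $\bar y = (a,b)$ from degree $1$, arrange a two-dimensional array of parameter tuples $(\bar a_{i,j})$ so that each row $\{\varphi(x;\bar a_{i,j}) : j<\omega\}$ is $2$-inconsistent (the commutator condition forces distinct $b$'s to be incompatible) while every choice function $\eta$ gives a consistent path $\{\varphi(x;\bar a_{i,\eta(i)}) : i<\omega\}$ (using that the free amalgam lets one independently prescribe the value of $[x,-]$ on independent degree-$1$ vectors). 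The stationary independence relation supplied by the first part is exactly what guarantees these consistency statements have solutions in $\C$.

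Concretely, the key steps I would carry out are: (i) verify the hypotheses of \Corollary \ref{aut2} for this $\K$ and conclude the stationary independence relation exists in $\C$; (ii) reduce the $\mathrm{TP}_2$ claim to the case $c=2$ by passing to an interpretable or definable $2$-nilpotent quotient/section (the degree-$\le 2$ part), since $\mathrm{TP}_2$ is inherited from a definable section; (iii) fix the formula $\varphi(x;\bar y)$ and, using the freeness of the amalgam together with \Theorem \ref{free=ind}, construct the required array and check the two combinatorial conditions ($k$-inconsistency of rows with $k=2$, and consistency along every branch); (iv) conclude $\mathrm{TP}_2$, hence $\mathrm{NIP}$ and simplicity both fail.

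The main obstacle is step (iii): producing an array where one simultaneously controls \emph{inconsistency within rows} and \emph{consistency along branches}. The row-inconsistency is easy---two distinct linear-algebra prescriptions on the same vector clash---but arranging that an arbitrary branch remains consistent requires the degree-$1$ parameters along a branch to be ``independent enough'' that the prescribed commutator values never conflict, and this is precisely where one must feed in the free-amalgamation machinery of \Theorem \ref{Lieam1}: realize the branch-type by iterated free amalgams so that \Lemma \ref{sigma}(1) yields a model. Since this argument is carried out in \cite{Bau13}, in the paper itself it is legitimate to state the $\mathrm{TP}_2$ conclusion with that reference and only sketch, rather than execute, the array construction.
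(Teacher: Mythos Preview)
Your proposal is correct and matches the paper's own treatment: the paper simply derives the first assertion from Corollary~\ref{aut2} (after verifying, as you do, that $L$ is finite and $\K$ is uniformly locally finite over a finite field) and disposes of the second assertion by a bare citation of \cite{Bau13}. Your additional sketch of how the $\mathrm{TP}_2$ array would be built in the $2$-nilpotent case goes beyond what the paper itself provides, but is consistent with it and with the argument in the cited reference.
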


For the tree property of the second kind see \cite{Bau13}.

\section{c-nilpotent graded assoziative  algebras} 

\begin{lemma}
c-nilpotent graded assoziative algebras do not have the amalgamation property.  
\end{lemma}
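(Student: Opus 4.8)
The plan is to refute \textbf{AP} by an explicit counterexample, exploiting the one feature distinguishing associative from Lie algebras here: a product $xyz$ of three degree-one elements can be bracketed as $(xy)z$ or as $x(yz)$, and I will arrange for these two readings to be forced apart in any would-be amalgam. Fix any field $K$ and any $c\ge 3$ (for $c\le 2$ the class does amalgamate, since a $2$-nilpotent graded associative algebra carries no associativity constraint at all, being just an arbitrary bilinear map $A_1\times A_1\to A_2$; the lemma is really about $c\ge 3$).

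I would take $B$ to be the graded $K$-space with basis $b$ of degree $1$ and $u$ of degree $2$ and \emph{zero multiplication}. Let $A$ be obtained from $B$ by adjoining a new degree-one element $a$ and a new degree-three element $z$, decreeing that the only nonzero product of basis elements is $a\cdot u=z$. Let $C$ be obtained from $B$ by adjoining a new degree-one element $c$, with the only nonzero product of basis elements being $b\cdot c=u$. The first task is to check that $A$ and $C$ are genuine $c$-nilpotent graded associative algebras and that $B$ is a graded subalgebra of each. This is easy: in $C$ every product of three elements has degree $\ge 3$ and so vanishes ($C$ being $2$-nilpotent), hence $C$ is associative even though $b\cdot c\neq 0$; in $A$ the only triple products of degree $\le 3$ are those of three degree-one basis elements, and $a,b$ multiply to $0$, so $A$ is associative as well. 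The grading conditions are immediate, the degree count shows $A$ is $3$-nilpotent and $C$ is $2$-nilpotent, $B$ sits inside each carrying its zero multiplication, and $A=\langle B,a\rangle$, $C=\langle B,c\rangle$; crucially $z\neq 0$ in $A$.

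Now I would argue that $A$ and $C$ have no amalgam over $B$ in the class. Suppose $D$ is a $c$-nilpotent graded associative algebra with embeddings $f\colon A\to D$ and $g\colon C\to D$ agreeing on $B$; identify $B$ with its common image and put $\alpha=f(a)$, $\gamma=g(c)$, $\zeta=f(z)$, so $\zeta\neq 0$ because $f$ is injective. As $f$ and $g$ are homomorphisms, $D$ satisfies $\alpha\cdot u=\zeta$, $\alpha\cdot b=0$, and $b\cdot\gamma=u$. Associativity in $D$ then forces
\[\zeta=\alpha\cdot u=\alpha\cdot(b\cdot\gamma)=(\alpha\cdot b)\cdot\gamma=0\cdot\gamma=0,\]
contradicting $\zeta\neq 0$. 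Hence no such $D$ exists and \textbf{AP} fails for the class of $c$-nilpotent graded associative algebras over $K$ when $c\ge 3$.

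The only delicate point—what I regard as the heart of the argument—is the choice of degrees in the two defining relations. The element $u=b\cdot c$ created in $C$ must live in a degree where a further left multiplication by $b$ is already forced to vanish (otherwise $(b\cdot b)\cdot c=0$ while $b\cdot(b\cdot c)=b\cdot u$, which would collapse $C$ itself), and the element $z=a\cdot u$ created in $A$ must have degree $3\le c$ so that $c$-nilpotency does not kill it prematurely; one then needs the two products to share the middle factor $u$, which is exactly why $u$ is placed in $B$. Once $B$ is taken with zero multiplication all of this is automatic and the verifications collapse to the trivial degree count above.
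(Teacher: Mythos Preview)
Your proof is correct. Both you and the paper refute \textbf{AP} by exhibiting an explicit triple $A\supseteq B\subseteq C$ of $3$-nilpotent graded associative algebras in which associativity at degree~$3$ forces a nonzero element of $A$ to vanish in any would-be amalgam, so the overall strategy is the same. The difference lies in the construction. The paper works exclusively with degree-$1$ generators and presents $A$ and $C$ as quotients of free $3$-nilpotent associative algebras: $B$ is free on $b_0,\dots,b_5$, $A$ is obtained by imposing $a_0b_0+a_1b_1=0$, and $C$ by imposing $b_0c+b_2b_4=0$ and $b_1c+b_3b_5=0$; expanding $(a_0b_0+a_1b_1)c$ in an amalgam yields $-a_0b_2b_4-a_1b_3b_5$, which is nonzero in $A$. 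Your example is much smaller---four basis elements in $A$, three in $C$, two in $B$---because you allow a basis element $u$ of degree~$2$ in $B$ and specify the multiplication table directly rather than through a free presentation; the associativity checks and the final contradiction $\zeta=\alpha(b\gamma)=(\alpha b)\gamma=0$ then become one-line computations. The paper's example shows that amalgamation already fails for algebras generated in degree~$1$; yours buys minimality and transparency.
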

\begin{proof}
We use the language from graded Lie algebras, but for the multiplication of $x$ and $y$ we write $xy$. Let $c = 3$. We consider the free 3-nilpotent graded assoziative algebras $F$ freely generated by $a_0, a_1, b_0, b_1, b_2, b_3,b_4,b_5,c$,  $F_A $ freely generated by 
$a_0, a_1, b_0, b_1, b_2, b_3,b_4,b_5$, $F_B$ freely genrated by  $b_0, b_1, b_2, b_3,b_4,b_5$, and $F_C$ freely generated by $b_0, b_1, b_2, b_3,b_4,b_5,c$. To obtain $A$ we factorize $F_A$ by the ideal $J_A$ generated by $a_0b_0 + a_1b_1$. $C$ is obtained from $F_C$  using the ideal generated by $b_0c + b_2b_4$ and $b_1c + b_3b_5$. The images of the $b_i$'s generate in $A$ and $C$ a subalgebra isomorphic to $F_B$. we call it $B$. An amalgam would be isomorphic to $F/J$, where $J$ is the ideal generated by $J_A$ and $J_C$. But this contains a new relation in $A$:

\[  (a_0b_0 + a_1b_1)c - a_1(b_1c+ b_3b_5) - a_0(b_0c + b_2b_4) = - a_1b_3b_5 - a_0b_2b_4 \]
a contradiction.   
\end{proof}

\section{c-nilpotent groups of exponent $p > c$}

As in \cite{Bau04} we consider the class c-nilpotent groups $\G^U_{c,p}$ of exponent $p > c$ with extra unary predicates $U_1, \ldots, U_c$ for a central Laszard series. That means we have 
\[ G = U_1(G) \supseteq \ldots \supseteq U_c(G) \] 
and 
\[ \langle \bigcup_{l+k = n} [U_l(G), U_k(G)] \rangle \subseteq U_n \subseteq Z_{c+1-n}.\]

Without the extra predicates this class of groups is denoted by $\G_{c,p}$. It is well known that the amalgamation property fails for $\G_{c,p}$. In \cite{Bau04} strong amalgamation for $\G^U_{c,p}$ is shown. 
A careful analysis of the proof shows that it is a free amalgam.
The free amalgamation of c-nilpotent graded Lie algebras over $\F_p$ is used (see Theorem \ref{Lieam1} for a correct proof). Therefore we get:

\begin{theorem} 
It holds:
\begin{enumerate}
\item $\G^U_{c,p}$ has (HEP), (JEP), and the free amalgamation property.
\item The \Frai limit $G^U_0$ of the finite groups in $\G^U_{c,p}$ exits.
\item $Th(G^U_0)$ is $\aleph_0$ - categorical and allows the elimination of quantifiers.
\end{enumerate}
\end{theorem}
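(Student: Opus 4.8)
The plan is to reduce all three parts to the corrected theory of graded Lie algebras from Section 4, via the translation between groups and Lie algebras furnished by the Lazard (Baker--Campbell--Hausdorff) correspondence, exactly along the lines of \cite{Bau04}. The only difference from \cite{Bau04} is that the appeal to its flawed amalgamation argument for graded Lie algebras is now to be replaced by Theorem \ref{Lieam1}; one must additionally observe that the amalgam produced there is \emph{free}, not merely strong, which is why the universal mapping property has to be tracked through the correspondence.

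For part (1): (HEP) is checked directly. If $H \le G$ with $G \in \G^U_{c,p}$, equip $H$ with $U_i(H) := U_i(G) \cap H$; then $H$ is again $c$-nilpotent of exponent $p$, the inclusion $\langle \bigcup_{l+k=n}[U_l(H),U_k(H)]\rangle \subseteq U_n(H)$ holds because commutators in $H$ coincide with those in $G$, and $U_n(H) \subseteq Z_{c+1-n}(H)$ holds because $Z_{c+1-n}(G) \cap H \subseteq Z_{c+1-n}(H)$. (JEP) is the instance of free amalgamation over the trivial subgroup. The substantive point is the free amalgamation property. Here I would invoke the construction of \cite{Bau04}: to each $G \in \G^U_{c,p}$ one associates a $c$-nilpotent graded Lie algebra over $\F_p$ out of the predicates $U_1 \supseteq \dots \supseteq U_c$, and, since $p > c$, the group multiplication and the Lie bracket are mutually definable through the truncated BCH formulas (every denominator up to $c!$ being a unit mod $p$), compatibly with the grading predicates on both sides. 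Under this translation the consistency condition $\Sigma_{\J}(A,B,C)$ for a diagram over $\J = \G^U_{c,p}$ goes over into the analogous condition for $c$-nilpotent graded Lie algebras over $\F_p$; by Theorem \ref{Lieam1} that class is closed under free amalgamation, so the Lie-algebra diagram has a model, hence so does $\Sigma_{\J}(A,B,C)$, and Lemma \ref{sigma}(1) yields the free amalgam inside $\G^U_{c,p}$. Since a homomorphism into an arbitrary test object $E \in \G^U_{c,p}$ translates to a Lie-algebra homomorphism into the Lie algebra attached to $E$, the universal property transfers back, so the amalgam is genuinely free.

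For part (2): let $\K$ be the class of finite members of $\G^U_{c,p}$. The language (group operations together with the finitely many unary predicates $U_1,\dots,U_c$) is finite, and $\K$ is uniformly locally finite --- an $n$-generated group of exponent $p$ and class $\le c$ has cardinality bounded in terms of $n,p,c$ --- so $\K$ has only countably many isomorphism types. (HP) and (JEP) for $\K$ are inherited from part (1), and (AP) for $\K$ holds because the free amalgam of two finite members is again finite and is in particular an amalgam. Fact \ref{fact2.2} then produces the \Frai limit $G^U_0$ of $\K$. For part (3): apply Fact \ref{fact2.3} to $\K$, using that $L$ is finite, $\K$ is uniformly locally finite, and $G^U_0$ is $\K$-saturated, to conclude that $Th(G^U_0)$ is $\aleph_0$-categorical and admits quantifier elimination.

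The main obstacle is the verification --- carried out in \cite{Bau04} --- that the Lazard correspondence is faithful enough to turn free amalgams of graded Lie algebras into free amalgams of groups: one must check that the BCH translation respects the grading predicates and commutes with homomorphisms into an arbitrary $E \in \G^U_{c,p}$, so that the universal mapping property on the Lie side descends to the group side rather than just the strong-amalgam property. Beyond this, the only points needing (routine) care are the restriction argument for (HEP) and the uniform local finiteness bound used in (2) and (3).
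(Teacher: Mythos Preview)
Your proposal is correct and follows the same route as the paper: the paper's ``proof'' is just the sentence preceding the theorem, which says to rerun the argument of \cite{Bau04} (i.e.\ the Lazard/BCH correspondence between $\G^U_{c,p}$ and $c$-nilpotent graded Lie algebras over $\F_p$) with the faulty Lie-algebra amalgamation replaced by Theorem~\ref{Lieam1}, and then to observe that the resulting strong amalgam is in fact free. Your write-up simply unpacks this reference, adding the routine checks for (HEP), uniform local finiteness, and the invocations of Facts~\ref{fact2.2} and~\ref{fact2.3}; the only cosmetic difference is that you phrase the transfer via $\Sigma_{\J}(A,B,C)$ and Lemma~\ref{sigma}(1) rather than directly tracking the construction of \cite{Bau04}.
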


Let $G_0$ be the reduct of $G^U_0$ to the language of group theory. 
Since $\Gamma_n(G^U_0) = Z_{c+1-n}(G^U_0) = U_n(G^U_0)$, $U_n(G^U_0)$ is in $G_0$ 0-definable.
Furthermore every $G \in \G_{c,p}$ becomes a structure in $\G^U_{c,p}$, if we define $U_n(G) = \Gamma_n(G)$. Hence we obtain:

\begin{cor}\label{G0}
$Th(G_0)$ is $\aleph_0$ - categorical and universal for all at most countable $G \in \G_{c,p}$.
\end{cor}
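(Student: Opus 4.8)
The two assertions are obtained separately, both by transfer from the already-established properties of $G^U_0$ together with the $0$-definability of the predicates in the group reduct.

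\textbf{$\aleph_0$-categoricity.} The plan is to exhibit $G_0$ as a \emph{definitional} reduct of $G^U_0$. For each $n\le c$ the term $Z_{c+1-n}$ of the upper central series is definable, uniformly in all groups, by a single formula $\chi_n(x)$ of the pure group language (iterated centralisers: $\chi_1$ defines $Z(G)$, and, $\chi_j$ being available, $\chi_{j+1}$ defines $\{x:\forall y\,[x,y]\in Z_j\}$). Since $U_n(G^U_0)=Z_{c+1-n}(G^U_0)$, the sentence $\forall x\,(U_n(x)\leftrightarrow\chi_n(x))$ belongs to $Th(G^U_0)$. Now let $N$ be any at most countable model of $Th(G_0)$ and expand it to a structure $N^U$ in the language with the extra predicates by interpreting each $U_n$ by $\chi_n$. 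Replacing every occurrence of $U_n$ by $\chi_n$ turns a sentence $\sigma$ of the enriched language into a pure group sentence $\sigma'$ with $Th(G^U_0)\vdash\sigma\leftrightarrow\sigma'$; then $G^U_0\models\sigma$ iff $G_0\models\sigma'$ iff $N\models\sigma'$ iff (by a routine induction) $N^U\models\sigma$, so $N^U\equiv G^U_0$. As $Th(G^U_0)$ is $\aleph_0$-categorical, $N^U\cong G^U_0$, hence $N\cong G_0$; thus $Th(G_0)$ is $\aleph_0$-categorical. (Alternatively one counts $n$-types via Ryll--Nardzewski, noting the reduct has no more of them than the expansion.)

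\textbf{Universality.} Let $G\in\G_{c,p}$ be at most countable. Its group structure is $c$-nilpotent of exponent $p$, so $G$ with $U_n(G):=\Gamma_n(G)$ is a structure $G^U$. The standard commutator calculus in nilpotent groups gives $[\Gamma_l(G),\Gamma_k(G)]\subseteq\Gamma_{l+k}(G)$ and $\Gamma_n(G)\subseteq Z_{c+1-n}(G)$, so $G^U\in\G^U_{c,p}$. Every finitely generated subgroup of $G$ is nilpotent of exponent $p$, hence finite; moreover for such a subgroup $H$ the induced predicates $U_n\cap H=\Gamma_n(G)\cap H$ still satisfy the defining inclusions of $\G^U_{c,p}$ (using $[\Gamma_l(G),\Gamma_k(G)]\subseteq\Gamma_{l+k}(G)$ for the lower bound and $Z_j(G)\cap H\subseteq Z_j(H)$ for the upper one). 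Hence the age of $G^U$ is contained in the age of $G^U_0$, and by $\K$-universality of the \Frai limit (Fact \ref{fact2.1}) $G^U$ embeds into $G^U_0$; forgetting the predicates, $G$ embeds into $G_0$. Since $G_0\in\G_{c,p}$, it is universal for all at most countable members of $\G_{c,p}$.

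\textbf{Main obstacle.} There is no deep difficulty; the only point demanding care is the last one — when invoking universality of the \Frai limit $G^U_0$ one must check that every finitely generated substructure of $G^U$, i.e. a subgroup carrying the \emph{induced} predicates rather than its own lower central series, still belongs to $\G^U_{c,p}$ and is finite, so that it lies in the age of $G^U_0$. Once this, together with the uniform definability of the upper central series in the pure group language, is in place, both assertions are formal consequences of the $\aleph_0$-categoricity, quantifier elimination, and Fraïssé universality of $G^U_0$ established above.
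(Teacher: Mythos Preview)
Your argument is correct and follows the same route the paper sketches in the two sentences preceding the corollary: $\aleph_0$-categoricity is transferred from $G^U_0$ via the $0$-definability of the $U_n$ (through the upper central series), and universality comes from expanding an arbitrary $G\in\G_{c,p}$ by its lower central series and invoking universality of the \Frai limit $G^U_0$. You spell out in detail the verification that finitely generated substructures of $G^U$ lie in the age of $G^U_0$; the paper leaves this implicit (indeed $\G^U_{c,p}$ is $\forall$-axiomatisable, so closure under substructures is automatic), but your explicit check does no harm.
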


By Theorem \ref{free=ind} and Corollary \ref{aut1} we obtain

\begin{theorem}
\item[(a)] In the monster model of  $Th(G^U_0)$ the  free amalgam exists and is a stationary independence relation.
\item[(b)] $Aut(G^U_0)$ is universal for all $Aut(N)$ where $(N)$ is a substructure of $G^U_0$.
\end{theorem}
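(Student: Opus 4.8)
The plan is to deduce both parts from Corollaries~\ref{aut1} and~\ref{aut2}, feeding in the preceding theorem, which already supplies that $\G^U_{c,p}$ has the free amalgamation property and that the \Frai limit $G^U_0$ of the finite groups in $\G^U_{c,p}$ exists. First I would record the two structural facts about $\G^U_{c,p}$ that this machinery needs. Its language is finite: the group operations together with the unary predicates $U_1,\dots,U_c$. The class is uniformly locally finite, since a $c$-nilpotent group of exponent $p$ on $n$ generators has order bounded by a function of $n$ (the corresponding free group being finite) and the unary predicates contribute nothing to the generated substructure; consequently every finitely generated structure in $\G^U_{c,p}$ is finite, the class $\K$ of finite groups in $\G^U_{c,p}$ is countable, and substructures of finitely generated structures are finitely generated. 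Finally $\G^U_{c,p}$ is $\forall$-elementary: the group axioms, $x^p=1$, and $c$-nilpotency are universal, and the defining conditions $\langle\bigcup_{l+k=n}[U_l(G),U_k(G)]\rangle\subseteq U_n\subseteq Z_{c+1-n}$ are captured by the universal sentences $U_l(x)\wedge U_k(y)\to U_n([x,y])$ (for $l+k=n$) and $U_n(z)\to[z,g_1,\dots,g_{c+1-n}]=1$.

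For part~(a) I would verify the hypotheses of Corollary~\ref{aut2} for the class $\K$: $L$ is finite, $\K$ is a countable uniformly locally finite class of finitely generated $L$-structures, a $\K$-saturated model $G^U_0$ exists (preceding theorem), and $\K$ is closed under free amalgamation. For the last point, the preceding theorem gives that $\G^U_{c,p}$ is closed under free amalgamation; since $\G^U_{c,p}$ is $\forall$-elementary and substructures of its finitely generated structures are finitely generated, Lemma~\ref{sigma}(2) transfers this property to the finitely generated — hence finite — members, i.e.\ to $\K$ (equivalently one may quote Lemma~\ref{sigma}(3) directly). Corollary~\ref{aut2} then yields that in the monster model $\C$ of $Th(G^U_0)$ the free amalgam of small substructures exists and defines a stationary independence relation, which is part~(a). (Theorem~\ref{free=ind} applied to $G^U_0$ gives the corresponding statement for finite subsets of $G^U_0$ itself.)

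For part~(b) I would check the hypothesis of Corollary~\ref{aut1}, namely that $G^U_0$ is a \Frai limit and that the free amalgam of finitely generated substructures of $G^U_0$ exists in $G^U_0$. Given finitely generated — hence finite — substructures $A\supseteq B\subseteq C$ of $G^U_0$, the free amalgam $A\otimes_B C$ is generated by finitely many elements and so again lies in $\K$; applying $\K$-saturation of $G^U_0$ to the inclusion $A\hookrightarrow G^U_0$ and the canonical embedding $A\hookrightarrow A\otimes_B C$ produces an embedding of $A\otimes_B C$ into $G^U_0$ which is the identity on $A$, and its image realizes the free amalgam inside $G^U_0$ (with $B$ a common substructure and intersection exactly $B$, since embeddings are injective). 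Hence Corollary~\ref{aut1} applies and $Aut(G^U_0)$ is universal for all $Aut(N)$ with $N$ a substructure of $G^U_0$, which is part~(b).

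I expect essentially no genuine obstacle: the substantive content has already been carried out, ultimately in Theorem~\ref{Lieam1} (free amalgamation for $c$-nilpotent graded Lie algebras over $\F_p$), which is what drives the free amalgamation of $\G^U_{c,p}$ used above. The only steps requiring a little care are checking that $\G^U_{c,p}$ is honestly universally axiomatizable — so that Lemma~\ref{sigma} legitimately passes free amalgamation from the whole class to its finite members — and noting that free amalgams of finite groups stay finite, so that they land in $\K$ and embed into $G^U_0$; both are immediate from uniform local finiteness.
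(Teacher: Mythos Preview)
Your proposal is correct and follows the same route as the paper, which simply cites Theorem~\ref{free=ind} and Corollary~\ref{aut1}; you just fill in the hypothesis-checking that the paper leaves implicit. Your one refinement is worth noting: for part~(a), which concerns the monster model of $Th(G^U_0)$ rather than $G^U_0$ itself, invoking Corollary~\ref{aut2} (as you do) is the precise reference, whereas Theorem~\ref{free=ind} alone only gives the stationary independence relation on finite subsets of $G^U_0$.
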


The same arguments as for Corollary \ref{G0} imply

\begin{cor}
$Aut(G_0)$ is universal for all $Aut(G)$ for all at most countable groups $G \in \G_{c,p}$.
\end{cor}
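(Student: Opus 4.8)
The plan is to deduce this final corollary from the preceding theorem about $G^U_0$ by exactly the same reduction that was used to obtain Corollary~\ref{G0} from the $\aleph_0$-categoricity and quantifier elimination statement for $Th(G^U_0)$. The point is that the extra predicates $U_n$ are superfluous once we pass to the reduct: since $\Gamma_n(G^U_0) = Z_{c+1-n}(G^U_0) = U_n(G^U_0)$, each $U_n$ is $0$-definable in the pure group language in $G_0$, so $\mathrm{Aut}(G_0) = \mathrm{Aut}(G^U_0)$ as abstract groups (every automorphism of the reduct automatically preserves the $0$-definable sets, hence is an automorphism of the expansion, and conversely). So it suffices to prove universality of $\mathrm{Aut}(G_0)$ for $\{\mathrm{Aut}(G) : G \in \G_{c,p}, \ |G| \le \aleph_0\}$.

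The key steps, in order, would be: first, fix an at most countable $G \in \G_{c,p}$ and expand it to a structure $G^U \in \G^U_{c,p}$ by setting $U_n(G) := \Gamma_n(G)$; here one must check that this expansion genuinely satisfies the axioms defining $\G^U_{c,p}$, i.e. that the lower central series is a central Lazard series in the required sense, $\langle \bigcup_{l+k=n}[U_l,U_k]\rangle \subseteq U_n \subseteq Z_{c+1-n}$ --- but this is standard nilpotent group theory (the first inclusion is the definition of the lower central series, the second is the classical fact that $\Gamma_n(G) \subseteq Z_{c+1-n}(G)$ for a $c$-nilpotent group). Second, invoke the preceding theorem: $\mathrm{Aut}(G^U_0)$ is universal for all $\mathrm{Aut}(N)$ with $N \subseteq G^U_0$ a substructure, and by Corollary~\ref{G0} (or directly $\K$-universality of the \Frai limit) $G^U$ embeds as a substructure of $G^U_0$. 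So $\mathrm{Aut}(G^U)$ embeds into $\mathrm{Aut}(G^U_0)$. Third, observe that $\mathrm{Aut}(G^U) = \mathrm{Aut}(G)$ because the predicates $U_n(G) = \Gamma_n(G)$ are characteristic subgroups, hence preserved by every group automorphism; and $\mathrm{Aut}(G^U_0) = \mathrm{Aut}(G_0)$ by the $0$-definability noted above. Chaining these, $\mathrm{Aut}(G)$ embeds into $\mathrm{Aut}(G_0)$, which is the claim.

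I do not expect a serious obstacle here: every ingredient is either quoted from earlier in the paper (the universality theorem for $G^U_0$, $\K$-universality of \Frai limits from Fact~\ref{fact2.1}) or is elementary nilpotent group theory (characteristicity of $\Gamma_n$, the inclusion $\Gamma_n \subseteq Z_{c+1-n}$). The one point deserving a sentence of care is the identification $\mathrm{Aut}(G_0) = \mathrm{Aut}(G^U_0)$: one needs that the coincidence of upper and lower central series in $G^U_0$ makes each $U_n$ definable without parameters in the reduct, which is already asserted in the text immediately before Corollary~\ref{G0}, so I would simply cite that. If anything is mildly delicate it is making sure the embedding of $G^U$ into $G^U_0$ as an $L_{\G^U}$-substructure restricts to a pure-group embedding of $G$ into $G_0$ --- but this is immediate since the reduct functor commutes with taking substructures.

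\begin{proof}
As observed before Corollary~\ref{G0}, in $G_0$ the subgroups $U_n(G^U_0) = \Gamma_n(G^U_0) = Z_{c+1-n}(G^U_0)$ are $0$-definable, so $\mathrm{Aut}(G_0) = \mathrm{Aut}(G^U_0)$. Let $G \in \G_{c,p}$ be at most countable. Expanding $G$ by $U_n(G) := \Gamma_n(G)$ gives a structure $G^U \in \G^U_{c,p}$, and $\mathrm{Aut}(G^U) = \mathrm{Aut}(G)$ since each $\Gamma_n(G)$ is a characteristic subgroup. By $\K$-universality of the \Frai limit $G^U_0$ (Fact~\ref{fact2.1}), $G^U$ embeds as a substructure of $G^U_0$; hence by the previous theorem $\mathrm{Aut}(G^U)$ embeds into $\mathrm{Aut}(G^U_0)$. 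Combining, $\mathrm{Aut}(G) = \mathrm{Aut}(G^U)$ embeds into $\mathrm{Aut}(G^U_0) = \mathrm{Aut}(G_0)$.
\end{proof}
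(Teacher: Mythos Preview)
Your proof is correct and follows exactly the argument the paper intends: the paper merely says ``the same arguments as for Corollary~\ref{G0} imply'' the result, and those arguments are precisely the two observations you spell out, namely that $U_n(G^U_0)$ is $0$-definable in $G_0$ (so $\mathrm{Aut}(G_0)=\mathrm{Aut}(G^U_0)$) and that any $G\in\G_{c,p}$ expands to $G^U\in\G^U_{c,p}$ via the lower central series (so $\mathrm{Aut}(G)=\mathrm{Aut}(G^U)$), after which one applies the preceding universality theorem for $G^U_0$.
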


\end{document}